\newcommand{\showfigs}{1}
\newcommand{\mytikz}[2]
{\ifodd \showfigs
\begin{tikzpicture}[scale=#1] \input{#2} \end{tikzpicture}
\fi
}
\newcommand{\Erdos}{Erd\H{o}s}
\newcommand{\be}{\begin{eqnarray}}
\newcommand{\ee}{\end{eqnarray}}
\newcommand{\ben}{\begin{eqnarray*}}
\newcommand{\een}{\end{eqnarray*}}
\newtheorem{theorem}{Theorem}[section]
\newtheorem{lemma}[theorem]{Lemma}
\newtheorem{corollary}[theorem]{Corollary}
\newenvironment{proof}[1][Proof]{\begin{trivlist}
\item[\hskip \labelsep {\bfseries #1}]}{\end{trivlist}}
\newenvironment{definition}[1][Definition]{\begin{trivlist}
\item[\hskip \labelsep {\bfseries #1}]}{\end{trivlist}}
\renewcommand{\qed}{\nobreak \ifvmode \relax \else
      \ifdim\lastskip<1.5em \hskip-\lastskip
      \hskip1.5em plus0em minus0.5em \fi \nobreak
      \vrule height0.75em width0.5em depth0.25em\fi}
\newenvironment{conjecture}[1][Conjecture]{\begin{trivlist}
\item[\hskip \labelsep {\bfseries #1}]}{\end{trivlist}}
\newcommand{\defin}[1]{\begin{definition} #1 \end{definition}}
\newcommand{\lemm}[1]{\begin{lemma} #1 \end{lemma}}
\newcommand{\theorm}[1]{\begin{theorem} #1 \end{theorem}}
\newcommand{\vertices}[1]{ \foreach \pos/\name in {#1} 
    \node[smallvertex] (\name) at \pos {}; }
\newcommand{\stubs}[1]{ \foreach \pos/\name in {#1} 
    \node[stub] (\name) at \pos {}; }
\newcommand{\edges}[1]{ \foreach \source/ \dest in {#1} \path[edge] (\source) -- (\dest); }
\newcommand{\dotedges}[1]{ \foreach \source/ \dest in {#1} \path[dotedge] (\source) -- (\dest); }
\newcommand{\advgraph}[4]{ \vertices{#1}\stubs{#2} \edges{#3}\dotedges{#4} }
\begin{document}

\tikzstyle{vertex}=[circle, draw, inner sep=0pt, minimum size=5pt]
\tikzstyle{smallvertex}=[circle, draw, inner sep=0pt, minimum size=4pt]
\tikzstyle{blackvertex}=[circle, fill=black, draw, inner sep=0pt, minimum size=4pt]
\tikzstyle{dmvertex}=[diamond, draw, fill, inner sep=0pt, minimum size=4pt]
\tikzstyle{stub}=[circle,  inner sep=0pt, minimum size=1pt]
\tikzstyle{bigvertex}=[circle, draw, inner sep=0pt, minimum size=12pt]
\tikzstyle{edge} = [draw,-]
\tikzstyle{thickedge} = [draw,thick,-]
\tikzstyle{dotedge} = [draw,dotted]

\title{Distance-critical and distance-redundant graphs}

\author{Andrew M. Steane}
\address{
Department of Atomic and Laser Physics, Clarendon Laboratory,\\
Parks Road, Oxford, OX1 3PU, England.
}

\date{\today}

\begin{abstract}
If a vertex in a graph can be deleted without affecting distances
among the other vertices, we shall say it is {\em distance-redundant}. 
Graphs with all, some or no such vertices are discussed. (The latter class
was termed {\em distance-critical} by \Erdos\ and Howorka).
\end{abstract}

\begin{keyword}
Shannon game \sep distance \sep graph \sep enumeration \sep twins \sep distance-critical
\end{keyword}

\maketitle

\section{Notation and background}

$G(V,E)$ is a simple undirected graph (no self-loops, no repeated edges) 
with vertex-set $V$
and edge-set $E$. 
$u,v,w$ are vertices. $d_u$ is the degree of vertex $u$. $d(u,v)$ is the
distance between $u$ and $v$. $N(u)$ is the open neighbourhood of $u$.
$N[u]$ is the closed neighbourhood of $u$. $G-v$ signifies the graph obtained
by removing vertex $v$ and its associated edges from $G$; (thus 
$G-v$ has vertex set $V\setminus\{v\}$ and edge set $E \setminus \{vw : w \in N(v)\})$).
Such an operation is called either {\em removal} or {\em deletion} of
a vertex (these are synonyms in this paper). The tensor product of
two graphs $A$ and $B$ is denoted $A \otimes B$ and is defined as that
graph whose adjacency matrix is the Kronecker product of those of $A$
and $B$. $K_n$ is the complete graph on $n$ vertices.\cite{Bondy1976,Xu2003}

The background to this work is two-fold. The idea that a vertex may
be superfluous (for some purposes) arises in the study of the Shannon switching 
game on vertices.\cite{05Hayward,06Hayward,Steane2012}
A vertex may not contribute usefully to some task such as forming
a coloured path, for example, because some other nearby vertex does the same and more. 
It occurred to the author that this idea has wider application to the study of distance
between vertices in graphs, and of isometric operations
(i.e. those that leave distances unchanged). The present paper defines a notion of
redundancy and criticality regarding distance in simple graphs, and sets out
a survey of properties related to this. Among other things we will be interested
in those graphs in which it is not possible to remove a vertex without increasing
the distance between some other pair of vertices. These graphs were termed
{\em distance-critical} by \Erdos\ and Howorka, who gave limits on an upper bound on 
the number of edges.\cite{Erdos1980} This term is not widely used, however, which suggests
it has not been widely studied. The diameter-critical graphs, by contrast, have
received much attention.\cite{Loh2016,Fan1986,Erdos1962}


The present discussion presents
many small results as opposed to a few large ones, but some that are easy to prove
were not easy to think of, and some are surprising.
Section \ref{s.surround} presents a relation leading to a partial ordering of vertices
which is invoked in some of
the later arguments. Section \ref{s.dr} defines {\em distance redundancy} and
obtains some basic facts. Section \ref{s.ws} gives an introduction to graphs which
are critical with respect to this property. The later sections comment on
random graphs, generation, tensor products, enumeration and on the idea of
recursively pruning the redundant vertices so as to obtain an isometric smaller
graph. I don't know if any of this will be of practical importance, but the basic idea
is quite simple and natural, which suggests it may contribute to other studies
in graph theory.

\section{On the `surrounds' relation} \label{s.surround}

\defin{
Vertex $v$ {\em surrounds} vertex $u$ iff $N(u) \subset N[v]$.
}

\defin{
Vertices $u$ and $v$ are {\em twins} if
either $N(u) = N(v)$ (called {\em weak twins}) or
$N[u] = N[v]$ (called {\em strong twins}).
}

In the literature there is also the terminology `weak and strong siblings' and
the terminology `false and true twins' referring to this same concept. 
When we assert that a vertex is a twin we allow that there may be more than one other vertex having the same neighbourhood (thus a set of twins can be of any
size greater than 1). According to our terminology, a pair (or more) of 
isolated vertices are twins.\footnote{In the literature, a graph without weak twins is called a {\em mating-type graph} or a 
{\em point determining} graph.\cite{Gessel2011}} 
The term `surrounds', by contrast, is not currently established; it is
introduced here and in \cite{Steane2012}.

\begin{lemma} \label{surroundsimple}
(Simple observations):
\begin{enumerate}
\item A simplicial vertex is surrounded by all its neighbours.

\item If $u$ and $v$ surround each other then they are twins. 

\item A surrounded vertex cannot be a cut-vertex (also called articulation
vertex). 

\item If $u$ is surrounded by a vertex of the same degree then they are twins. 

\item If $v$ surrounds $u$ in $G$ then $u$ surrounds $v$ in 
the complement graph $\bar{G}$.
\end{enumerate}
\end{lemma}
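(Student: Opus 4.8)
All five items are elementary set-theoretic facts about the neighbourhoods $N(u)$, $N[u]$, and I would prove each by simply unfolding the definitions. The one recurring device is a split according to whether $u$ and $v$ are adjacent, since $N[v]=N(v)\cup\{v\}$ and $v\in N(u)$ precisely when $uv\in E$; throughout I read $N(u)\subset N[v]$ as $N(u)\subseteq N[v]$, and in (ii) and (iv) I take $u\ne v$ (and in (iii) that $u$ is surrounded by some \emph{other} vertex), since otherwise the statements are vacuous or false.

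Item (i) is immediate: if $u$ is simplicial then $N(u)$ induces a clique, so any $v\in N(u)$ is adjacent to every other member of $N(u)$ and hence $N(u)\subseteq N(v)\cup\{v\}=N[v]$. For (ii) and (iv) the skeleton is the same. If $u$ and $v$ are non-adjacent, the hypothesis $N(u)\subseteq N[v]$ reduces, since $v\notin N(u)$, to $N(u)\subseteq N(v)$; a symmetric step (in (ii)) or a comparison of cardinalities using $d_u=d_v$ (in (iv), where finiteness of $G$ enters) upgrades this to $N(u)=N(v)$, i.e.\ weak twins. If $u$ and $v$ are adjacent, the same hypothesis gives $N(u)\setminus\{v\}\subseteq N(v)\setminus\{u\}$, again promoted to equality by symmetry or by degrees, whence $N[u]=\bigl(N(u)\setminus\{v\}\bigr)\cup\{u,v\}=N[v]$, i.e.\ strong twins. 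For (v) I would argue directly in $\bar G$: given $N_G(u)\subseteq N_G[v]$, take any $w\ne u$ with $w\in N_{\bar G}(v)$; then $w\ne v$ and $w\notin N_G(v)$, so $w\notin N_G[v]$ and therefore $w\notin N_G(u)$, giving $w\in N_{\bar G}(u)$. Hence $N_{\bar G}(v)\subseteq N_{\bar G}[u]$, as required.

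For (iii) the argument is a short rerouting one. Suppose $v$ surrounds $u$ and let $a,b$ lie in the same component of $G$; I want them in the same component of $G-u$. Take a path between them in $G$; if it avoids $u$ we are done, and otherwise it meets $u$ once, entering and leaving via vertices $a',b'\in N(u)\subseteq N[v]$. Since each of $a',b'$ is either equal to $v$ or adjacent to $v$, the two-edge walk $a'\,v\,b'$ (or a trivial walk if $v\in\{a',b'\}$) joins $a'$ to $b'$ without using $u$; splicing it in yields an $a$--$b$ walk in $G-u$. Thus $G-u$ has no more components than $G$, so $u$ is not a cut-vertex.

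I do not expect a genuine obstacle; these are ``simple observations'' by design. The only place needing a little care is the edge-case bookkeeping --- the distinction between $N$ and $N[\,\cdot\,]$ when $u$ and $v$ are adjacent, the degenerate possibility $v\in\{a',b'\}$ in (iii), and the appeal to finiteness in (iv) --- none of which is deep.
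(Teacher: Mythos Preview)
Your proposal is correct and follows essentially the same route as the paper: (i)--(ii) are dismissed as immediate, (iii) is the same rerouting argument through the surrounding vertex, (iv) is the same adjacency case-split with a degree/cardinality count, and (v) is the same contrapositive passage to the complement. If anything you are more careful than the paper about edge cases (the possibility $v\in\{a',b'\}$ in (iii), the exclusion $w\ne u$ in (v), and the explicit appeal to finiteness in (iv)).
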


\begin{proof}
The first two are easy to prove. For the third, observe that if
$u$ is surrounded by $w$ then for every path $s$-$u$-$v$ (for vertices $s$,$u$,$v$)
there is a path 
either $s$-$v$ or $s$-$w$-$v$ so $s$ and $v$ are still in the same connected
component after deletion of $u$. \qed

For item (iv),
let $v$ be the surrounding vertex, and let $d_u$, $d_v$ be the degrees of $u$ and $v$
respectively. If $v$ is not adjacent to $u$ then $|N(u)| = d_u$ of its edges go to
neighbours of $u$. If $d_v = d_u$ then $v$ has no other edges, therefore $u$ is
adjacent to all of $N(v)$, therefore $u$ surrounds $v$ so they are twins. 
If $v$ is adjacent to $u$ then $|N(u)|-1 = d_u-1$ of its edges go to $N(u)$
and one goes to $u$. If $d_v = d_u$ then it has no other edges, therefore
$u$ is adjacent to all of $N[v]$, therefore $u$ surrounds $v$ so they are twins.
\qed

For item (v), observe that since there is an edge between $v$
and every neighbour of $u$ in $G$, there must be a non-edge between $v$ and
every non-neighbour of $u$ in $\bar{G}$. Hence in $\bar{G}$ all edges from
$v$ go to neighbours of $u$, hence $u$ surrounds $v$. \qed
\end{proof}

It follows that $G$ has no surrounded vertices iff $\bar{G}$ has
no surrounded vertices. Hence graphs with no surrounded vertices come in
pairs whenever $G \ne \bar{G}$, and so do graphs with all surrounded vertices.

{\bf Notation}:
The notation $v \succ u$ shall signify that $v$ surrounds $u$ and is
not surrounded by $u$. The notation $v \succeq u$ shall signify that 
$v$ surrounds $u$ and may or may not be surrounded by $u$. The notation
$v \cong u$ shall signify that $u$ and $v$ are twins. The notation
$v \prec u$ is equivalent to $u \succ v$. 

\lemm{The surrounding relation is a partial ordering.} 

\begin{proof} We show the three required properties.
(1) $u \succeq u$ by the definition of `surrounds'. (2) If $u \succeq v$ and
$v \succeq u$ then $u \cong v$; this was already mentioned 
in item (ii) of lemma \ref{surroundsimple}. (3)
If $w \succeq v$ and $v \succeq u$ then $w \succeq u$; this is obvious
from the definitions. \qed
\end{proof}

Some simple consequences.

If $u \succeq v$ and $v \succeq w$ and $w \succeq u$ then $u \cong v \cong w$.

It follows that in the directed graph of surrounding relations (that is,
the directed graph in which there is an arc from $u$ to $v$ iff $u \succeq v$
in $G$) all cycles can be traversed in either direction (in other words,
there are no cycles without this property). Also, if all vertices in a graph are surrounded then there must be at least two twins.

\lemm{  \label{cutsurrounded}
(i)
If $w \succeq v \succeq s$ in $G$, for distinct vertices $s,v,w$,
then $w \succeq v$ in $G-s$.\\
(ii)
If $w \succeq v$ and $u \succeq s$ in $G$,
for distinct vertices $u,v,w,s$, then $w \succeq v$ in $G-s$.
}
\begin{proof}
The only edges affected by the deletion of $s$ are those incident
on $s$. It follows that if $w$ is not a neighbour of $s$ then deletion
of $s$ does not change $N(w)$. Meanwhile the deletion of $s$ can only
remove, not add, members to $N(v)$, so $v$ will still be surrounded by
$w$. In the case where $w$ is a neighbour of $s$ then $N(w)$ will lose
one member, but this same member (namely $s$) either was not present in
$N(v)$ or is also removed from $N(v)$, so $w$ will still surround $v$.
\qed
\end{proof}

\defin{
A graph operation is termed {\em isometric} if it preserves the distances among
vertices present before and after the operation.}

(According to the terminology adopted here, the deletion of a cut-vertex
is not an isometric operation.)

\theorm{ \label{th.isosurround}
(i) Removing all but one of each set of twins is isometric.\\
(ii) Removing all non-twin surrounded vertices is isometric.
}
 
\begin{proof}
(i) is obvious but if the reader desires further demonstration,
see the observations on distance-redundant vertices made in the rest of the paper. To prove (ii) we first consider the removal of a single vertex $s$.
If $v \succ s$ then for every path of least distance between vertices
$u \ne s$ and $w \ne s$ involving the walk $nsm$ (where $\{n,m\} \in N(s)$)
there is another path of the same length where this section is replaced by
$nvm$. It follows that the removal of $s$ is isometric. 
The case of multiple surrounded vertices is treated by induction
employing lemma \ref{cutsurrounded}, as follows.
Let $S$ be the set of non-twin surrounded vertices in $G$.
Pick any two members ($s,v$) of $S$. Since they are not twins
the only possibilities are that one surrounds the other or that
neither surrounds the other. These possibilities are the ones considered
in the two parts of lemma \ref{cutsurrounded}. It follows that we can
always remove one of them from $G$ (which we already proved to be
isometric) while leaving the other still surrounded. Continuing thus
we can eventually remove all of $S$ by isometric operations.
\qed
\end{proof}

In such a sequence of deletions, one or more vertices in
$S$ that are not twins in $G$ may become twinned with other vertices
in the subgraph obtained by deletion. 
However the vertex/vertices with which they are twinned
will not themselves be in $S$ so it never happens that all members of
a set of twins are deleted.

\begin{corollary}
Every cut-set contains at least one vertex which is either
a twin or not surrounded.
\end{corollary}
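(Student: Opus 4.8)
The plan is to argue by contradiction from the partial-order structure of the \emph{surrounds} relation: assume some cut-set consists entirely of surrounded non-twins and derive a contradiction. First I would reduce to the case that $G$ is connected. A cut-set of $G$ must disconnect some component $C$, its restriction to $C$ is a cut-set of $C$, and a vertex is a twin (respectively, surrounded) in $C$ if and only if it is in $G$, since all the neighbourhoods involved lie inside $C$. Next, replace the given cut-set by a minimal cut-set $S'$ contained in it; $S'$ still consists of surrounded non-twins, because those are properties of individual vertices of the unchanged graph $G$.

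The key structural input is the standard fact that, as $S'$ is a minimal cut-set of the connected graph $G$, every $s\in S'$ has a neighbour in each connected component of $G-S'$; in particular $s$ has a neighbour $a$ in some component $A$ and a neighbour $b$ in a different component $B$. Now use the hypothesis that $s$ is surrounded: there is a vertex $v\neq s$ with $N(s)\subseteq N[v]$, hence $a,b\in N[v]$. If $v=a$ then $v$ would be adjacent to $b$, impossible since $a\in A$ and $b\in B$ lie in different components of $G-S'$ and neither is in $S'$; likewise $v\neq b$. So $v$ is adjacent to both $a\in A$ and $b\in B$, which forces $v\in S'$ (a vertex outside $S'$ has all its neighbours outside $S'$ in a single component of $G-S'$). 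Since $s$ is not a twin and $v$ surrounds $s$, $v$ cannot also be surrounded by $s$ (Lemma~\ref{surroundsimple}(ii)), so in fact $v\succ s$.

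This yields a map $s\mapsto v(s)$ from the finite set $S'$ to itself with $v(s)\succ s$ for every $s$. Iterating, the orbit $s\prec v(s)\prec v^{2}(s)\prec\cdots$ is eventually periodic, so $v^{i}(s)=v^{j}(s)$ for some $i<j$, giving $x\prec x$ for $x=v^{i}(s)$ and contradicting the irreflexivity of the strict order associated with the partial ordering established above. The only genuine step is pinning down the location of the surrounding vertex, i.e.\ showing $v\in S'$; everything after that is formal. (One could instead route the argument through Theorem~\ref{th.isosurround}(ii) --- peeling the vertices of $S'$ off one at a time would be isometric and so could not disconnect $G$ --- but justifying a legitimate peeling order needs a small auxiliary argument of its own, so the partial-order route above seems cleaner.)
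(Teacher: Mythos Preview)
Your proof is correct. The paper, however, offers no separate argument: the statement is placed immediately after Theorem~\ref{th.isosurround} as a direct corollary. The intended reasoning is that the peeling procedure in the proof of Theorem~\ref{th.isosurround}(ii) works verbatim for any set $C$ of surrounded non-twins (remove a $\succeq$-minimal member; by the proof of Lemma~\ref{cutsurrounded} every other member keeps its surrounder), so $C$ can be deleted isometrically and hence $G-C$ stays connected---contradicting $C$ being a cut-set.

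Your route is genuinely different. Instead of removing anything, you pass to a minimal cut-set $S'$, use the neighbours-in-two-components property of minimal cuts to show that the vertex surrounding each $s\in S'$ must itself lie in $S'$, and then let the strict partial order supply the contradiction via a strictly increasing self-map on the finite set $S'$. This is self-contained and sidesteps the need to check a valid peeling order (which, as you observe, the Theorem~\ref{th.isosurround}(ii) route does require). The paper's approach has the advantage of recycling machinery already in place; yours isolates the combinatorial core (the surrounder of a cut-set vertex is trapped in the cut-set) more explicitly. Both ultimately hinge on the same partial-order structure of $\succeq$.
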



\begin{theorem} \label{th.stensor}
	If two graphs each have no surrounded vertex then their tensor
	product has no surrounded vertex.
\end{theorem}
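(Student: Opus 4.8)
The plan is to argue by contradiction, using the fact that open neighbourhoods in a tensor product factor as Cartesian products. Take $A \otimes B$ to have vertex set $V(A) \times V(B)$, so that $(a,b)$ is adjacent to $(a',b')$ precisely when $a$ is adjacent to $a'$ in $A$ and $b$ is adjacent to $b'$ in $B$; equivalently
\[
  N_{A \otimes B}\bigl((a,b)\bigr) \;=\; N_A(a) \times N_B(b).
\]
Suppose some vertex $(a,b)$ of $A \otimes B$ were surrounded, by $(a',b') \ne (a,b)$ say. Then $N_A(a) \times N_B(b) \subseteq N_{A \otimes B}\bigl[(a',b')\bigr] = \bigl(N_A(a') \times N_B(b')\bigr) \cup \{(a',b')\}$, and the goal is to deduce from this that one of the two factors already has a surrounded vertex.

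First I would note that neither factor can have an isolated vertex: if $x$ were isolated in $A$ then $N_A(x) = \emptyset \subseteq N_A[y]$ for every other vertex $y$, so $x$ would be surrounded (this uses that $A$ has at least two vertices; if a factor is a single vertex the product is edgeless, and the statement should be read with the harmless convention that each factor has an edge). Hence $N_A(a)$ and $N_B(b)$ are both nonempty, which is exactly what makes the next step go through.

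The core step is to strip the lone exceptional vertex $(a',b')$ off the right-hand side. Pick any $x \in N_A(a)$. If $x = a'$ then certainly $x \in N_A[a']$. If $x \ne a'$, choose any $y \in N_B(b)$ — possible since that set is nonempty — so that $(x,y)$ lies in $N_A(a) \times N_B(b)$ yet differs from $(a',b')$ already in its first coordinate; hence $(x,y) \in N_A(a') \times N_B(b')$, and in particular $x \in N_A(a')$. In either case $x \in N_A[a']$, so $N_A(a) \subseteq N_A[a']$, i.e.\ $a'$ surrounds $a$ in $A$. Running the same argument with the two coordinates interchanged gives that $b'$ surrounds $b$ in $B$.

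To finish, observe that $(a',b') \ne (a,b)$ forces $a' \ne a$ or $b' \ne b$; in the first case $a$ is a genuinely surrounded vertex of $A$, in the second $b$ is one of $B$, and either possibility contradicts the hypothesis. I do not anticipate a real obstacle: the argument turns entirely on the neighbourhood-as-product identity together with the nonemptiness of the factor neighbourhoods, and the only points that need a little care are the treatment of the stray vertex $(a',b')$ and the degenerate single-vertex factor, which I would flag explicitly rather than gloss over.
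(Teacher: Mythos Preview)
Your proof is correct and follows essentially the same approach as the paper's: both exploit the factorisation $N_{A\otimes B}((a,b)) = N_A(a)\times N_B(b)$ to show that a surrounding vertex $(a',b')$ in the product forces $a'\succeq a$ in $A$ and $b'\succeq b$ in $B$, and then use $(a',b')\ne(a,b)$ to extract a genuinely surrounded vertex in one factor. You are in fact more careful than the paper in explicitly treating the stray point $(a',b')$ and in flagging the degenerate single-vertex-factor case (where the theorem as literally stated fails, e.g.\ $K_1\otimes C_5$), which the paper's argument silently assumes away.
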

\begin{proof}
	Let $A$, $B$ be the two graphs and $T = A \otimes B$. 
	Consider the vertex $(u,v)$ in $T$, where $u \in A$, $v \in B$.
	Some other vertex $(s, t)$ surrounds $(u,v)$ iff it
	is adjacent to all the neighbours of $(u,v)$ that are not itself.
	Those neighbours
	are the vertices $(u_i, v_j)$ where $u_i \in N(u)$ and
	$v_j \in N(v)$. 
	In order that $(s,t)$ shall surround $(u,v)$, 
	therefore, we require that $s$ is adjacent to all of $N(u) \setminus \{s\}$ 
	and $t$ is adjacent to all of $N(v) \setminus \{t\}$. This can happen only
	in the following situations:
	($s=u$ and $t \succeq v$) or ($t=v$ and $s \succeq u$)
	or ($s \succeq u$ and $t \succeq v$).
	However if neither graph has any surrounded vertex
	then none of these cases can arise for any $(u,v)$, hence
	no vertex in $T$ is surrounded.
	\qed
\end{proof}

It is possible for a tensor product of graphs with all
surrounded vertices to have no surrounded vertices; an example is
$K_3 \otimes K_3$. 

\ifodd 0
\begin{conjecture}
[Tanya Khovanova]
The number of unlabelled point-determining graphs of given order is equal to the number
of unlabelled endpoint-free graphs of that order.
\end{conjecture}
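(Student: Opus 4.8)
The plan is to turn the statement into a concrete identity between two ordinary generating functions built by the same P\'olya/species machinery, one adapted to point-determining graphs and one to endpoint-free graphs --- here I read ``endpoint-free'' as ``having no vertex of degree $1$'' (an endpoint being a pendant vertex), which matches the shared initial values $1,1,2,5$ for $n=1,\dots,4$. Write $p_n$ for the number of unlabelled point-determining graphs on $n$ vertices. The point is that $p_n$ and the endpoint-free count can each be expressed, in two very different-looking ways, through the counts of all unlabelled graphs, and the whole task is to show the two expressions agree.

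On the point-determining side, $N(u)=N(v)$ is an equivalence relation whose classes are independent sets of mutual weak twins (weak twins are never adjacent, since $u\notin N(u)$); contracting each class yields a point-determining graph, and a graph is recovered from that quotient together with the partition into twin-classes, an edge joining two classes exactly when every cross-pair is an edge. In the language of species this is $\mathcal{G}\cong\mathcal{P}\circ\mathcal{E}_{\ge 1}$, with $\mathcal{G},\mathcal{P},\mathcal{E}_{\ge 1}$ the species of graphs, of point-determining graphs, and of nonempty sets; passing to cycle-index series, $Z_{\mathcal{G}}=Z_{\mathcal{P}}\circ Z_{\mathcal{E}_{\ge 1}}$, so $Z_{\mathcal{P}}$ is $Z_{\mathcal{G}}$ composed plethystically with the compositional inverse of $Z_{\mathcal{E}_{\ge 1}}$, and $p_n$ is the coefficient of $x^n$ once one sets $p_i=x^i$. (Equivalently one may pass to complements: $\bar G$ is point-determining iff $G$ is co-point-determining, i.e.\ has all closed neighbourhoods distinct, so the same numbers count both classes.)

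On the endpoint-free side, a graph with no vertex of degree $1$ is simply a minimum-degree-$\ge 2$ graph together with some isolated vertices, so the endpoint-free count on $n$ vertices is $\sum_{m=0}^{n}d_m$, where $d_m$ is the number of unlabelled graphs of minimum degree $\ge 2$ on $m$ vertices (with $d_0=1$); the conjecture is therefore equivalent to $p_n-p_{n-1}=d_n$. To pin down $d_n$ in turn, use the $2$-core: repeatedly deleting vertices of degree $\le 1$ from any graph leaves its $2$-core (the maximal minimum-degree-$\ge 2$ subgraph, possibly empty), the deleted vertices forming a forest --- partly rooted trees planted at $2$-core vertices, partly tree-components --- so that $\mathcal{G}\cong(\mathcal{D}\circ\mathcal{AR})\cdot(\mathcal{E}\circ\mathcal{T})$, with $\mathcal{D},\mathcal{AR},\mathcal{T}$ the species of minimum-degree-$\ge 2$ graphs, of rooted trees and of trees. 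Since $Z_{\mathcal{G}},Z_{\mathcal{AR}},Z_{\mathcal{T}}$ are classically known, inverting $Z_{\mathcal{AR}}$ plethystically and dividing out the forest series $Z_{\mathcal{E}}\circ Z_{\mathcal{T}}$ expresses $Z_{\mathcal{D}}$, and hence $d_n$, purely through $Z_{\mathcal{G}}$.

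What remains is to prove that $p_n-p_{n-1}=d_n$ really holds, i.e.\ that the two pull-backs of $Z_{\mathcal{G}}$ --- through the inverse of $Z_{\mathcal{E}_{\ge 1}}$ on one side and through the inverse of $Z_{\mathcal{AR}}$ together with the tree correction on the other --- agree after $p_i\mapsto x^i$. This is the hard part, and presumably the reason the statement is only conjectured: the two core-reductions (inflating vertices by independent sets of twins; stripping pendant trees) have nothing obviously in common, the natural candidate bijections between the two families already fail (complementation, for one, fails at $n=4$), and I see no a priori reason why the two plethystic inverses should conspire. The things I would actually try are: check the identity order by order, since for each fixed $n$ it is a finite computation, and look for structure in any discrepancy; hunt for a bijection pairing the degree-$1$ vertices of a point-determining graph (or, via complements, a co-point-determining graph) with the tree-decorations produced by the $2$-core reduction; or try to deduce it from the known enumeration of point-determining graphs~\cite{Gessel2011}.
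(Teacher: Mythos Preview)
There is no proof in the paper to compare against. The statement is presented purely as a conjecture; the surrounding text says only that it ``is included merely for note; it is not otherwise connected to the discussion in this paper. Its plausibility is owing to the fact that it is true for orders up to 10,'' followed by the bracketed remark ``[I NOW THINK THIS IS ALREADY KNOWN AND PROVED].'' In fact the entire block is wrapped in \verb|\ifodd 0 ... \fi| and is suppressed from the compiled output. So the paper offers no argument, only numerical evidence and a pointer to OEIS A004110.

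Your proposal is likewise not a proof, as you yourself say plainly. The species decompositions you set up are correct and standard: $\mathcal{G}\cong\mathcal{P}\circ\mathcal{E}_{\ge 1}$ for the point-determining side, and the $2$-core/pendant-tree decomposition on the other side, together with the observation that an endpoint-free graph is a min-degree-$\ge 2$ graph plus isolated vertices, reducing the claim to $p_n-p_{n-1}=d_n$. But having written both sides as plethystic pull-backs of $Z_{\mathcal G}$, you then stop and say ``I see no a priori reason why the two plethystic inverses should conspire.'' That is the whole content of the conjecture, so what you have is a reformulation, not a proof.

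For what it is worth, the author's bracketed suspicion is right: the equality is known. The relevant literature is the enumeration of mating (point-determining) graphs --- Kilibarda's work and the treatment in Gessel and Li (which you already cite) establish exactly the identities needed. If you want to turn your outline into a proof, that is where to look; the missing step is not a new idea but a known computation relating the cycle-index series you have written down.
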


This conjecture is included merely for note; it is not otherwise connected
to the discussion in this paper. Its plausibility is owing to the fact that it
is true for orders up to 10. The sequence of values is OEIS A004110.
[I NOW THINK THIS IS ALREADY KNOWN AND PROVED]
\fi

{\bf Algorithms}.
{\em A simple matrix method to find surrounding--surrounded vertex
pairs.} For a matrix $M$ let $[M]_{j,k}$ denote the element at
row $j$, column $k$. 
Let $a$ be the adjacency matrix of a graph $G$.
Then $v \succeq u$ in $G$ iff $[ a^2 + a ]_{u,v} = d_u$.
(The matrix $a^2$ can be obtained from $a$ without a matrix multiplication
by regarding each row of $a$ as a single binary number; one
combines these binary numbers in pairs by bitwise {\sc and} operations,
recording the Hamming weight (sum of binary digits) of the results.)

To find twins one may use lemma \ref{surroundsimple}(iv). Another
simple way is to regard each row of the
adjacency matrix as a single binary number. One sorts these
binary numbers (keeping track of which is which) and then
looks for equal adjacent entries in the sorted list. This reveals
the weak twins. To find the strong twins, perform the same operation
but starting from the matrix $a + I$ where $I$ is the identity matrix.

\section{Distance-redundancy} \label{s.dr}

\defin{
A vertex is {\em first-order distance redundant} iff its removal does not change any distance among the other vertices.}

\defin{
A vertex $u$ is {\em $k$'th-order distance redundant} iff its removal does not change the distance between any
pair of vertices $\{ v, w \}$ satisfying $d(u,v) \ge k$, $d(u,w) \ge k$, $d(v,w) \ge 2k$.}

(The first-order definition here is subsumed by the general definition;
it has been included for clarity.)

The idea behind these definitions is
that deletion of a vertex may affect distances among nearby vertices without affecting distances among
those further away. It follows, for example, that if we wish to learn the distance between two vertices
$\{ t_1, t_2 \}$ and we have found a first-order distance redundant vertex $u \notin \{t_1,t_2\}$
then we could first delete $u$ and then find $d(t_1, t_2)$ in the resulting smaller graph. Equally,
if we have established that $d(t_1, t_2) \ge 4$ then we could remove a vertex $u$ which is
2nd-order distance redundant and not in $N[t_1] \cup N[t_2]$
and then find
$d(t_1,t_2)$ in the resulting smaller graph.

In the following we use the unadorned term `redundant' as a shorthand for 
first-order distance redundant.

\subsection{Properties associated with redundant vertices}

A redundant vertex has near to it some set of vertices
providing 2-walks between its neighbours;
in the case of a surrounded vertex there is such a set with just one
member.

\lemm{
(Simple observations):
\begin{enumerate}
\item A surrounded vertex is redundant.

\item A cut-vertex is not redundant. 

\item If a vertex is $k$'th-order redundant then it is also
$(k+1)$'th-order redundant.

\item A vertex of degree $2$ is redundant iff it is surrounded.

\item Every vertex in a graph of diameter $D$ is $\lceil (D+1)/2 \rceil$-order
redundant.
\end{enumerate}
}

(i)--(iv) are straightforward. Item (v) follows from the observation
that there are no vertices at distances $\ge 2k$ when $2k$ exceeds the diameter.

\begin{figure}
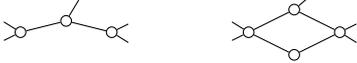

	\centering{
\tikz[scale=0.3]{
\advgraph    
{ {(-2,0)/a}, {(0,0.5)/b}, {(2,0)/c}}
{ {(-2.8,0.5)/as}, {(-2.8,-0.4)/bs}, {(2.8,0.4)/cs}, {(2.8,-0.5)/ds},
{(0.6,1.5)/es}}    
{a/b, b/c, a/as, a/bs, c/cs, c/ds, b/es}
{}
\advgraph    
{ {(8,0)/a}, {(10,1)/b}, {(10,-1)/c}, {(12,0)/d}}
{ {(7.2,0.5)/as}, {(7.2,-0.4)/bs}, {(12.8,0.4)/cs}, {(12.8,-0.5)/ds},
{(10.7,1.6)/es}}    
{a/b, a/c, b/d, c/d, a/as, a/bs, d/cs, d/ds, b/es}
{}
}
}
\caption{Illustration of weak link (left) and strong link (right).}
\label{f.wkstrong}
\end{figure}

\defin{A pair of vertices is {\em weakly linked} iff they
are not adjacent and there is one and only one 2-walk between them.
(c.f. Fig. \ref{f.wkstrong}.)
A pair of vertices is {\em strongly linked} 
iff they are either adjacent or have more than one 2-walk between them.
Thus $u$ and $v$ are strongly linked iff either $v \in N(u)$ or
$|N(u) \cap N(v)| \ge 2$.
}

A useful way to think about first-order redundant vertices
is the following. Consider a vertex $s$. 
We take the subgraph
induced by $N(s)$, and add to it edges between any vertices which
are strongly linked in the original graph. Iff the result 
is a complete graph, $s$ is redundant (see Fig. \ref{f.clique}).

Alternatively, one may take a given graph $G$ and then construct alongside
it a graph $G'$ whose vertices correspond to those of $G$ and which has no edges
in the first instance. Then add to $G'$ an edge between any pair of vertices
for which the corresponding pair in $G$ is weakly linked. A vertex $v$
in $G$ is redundant iff there are no edges in $G'$ among the vertices
corresponding to $N(v)$.

\begin{figure}
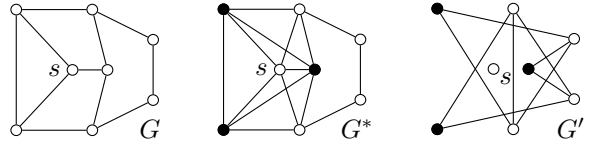

 \tikz[scale=0.2]{
\vertices
{ {(2,-2)/a}, {(2,2)/b}, {(-2,4)/c}, {(-7,4)/d}, {(-7,-4)/e}, {(-2,-4)/f}, {(-1,0)/g}, {(-3.3,0)/h}}
\edges
{a/b, a/f, b/c, c/d, c/g, d/e, d/h, e/f, e/h, f/g, g/h}
\path (-4.5,0) node[] {$s$};
\path (1.8,-3.9) node[] {$G$};
}
\hspace{4mm}
 \tikz[scale=0.2]{
\vertices
{ {(2,-2)/a}, {(2,2)/b}, {(-2,4)/c}, {(-7,4)/d}, {(-7,-4)/e}, {(-2,-4)/f}, {(-1,0)/g}, {(-3.3,0)/h}}
\edges
{a/b, a/f, b/c, c/d, c/g, d/e, d/h, e/f, e/h, f/g, g/h, f/h, e/g, c/h, d/g}
\path (-4.5,0) node[] {$s$};
\path (-7,4) node[blackvertex] {};
\path (-7,-4) node[blackvertex] {};
\path (-1,0) node[blackvertex] {};
\path (1.8,-3.9) node[] {$G^*$};
}
\hspace{4mm}
 \tikz[scale=0.2]{
\vertices
{ {(2,-2)/a}, {(2,2)/b}, {(-2,4)/c}, {(-7,4)/d}, {(-7,-4)/e}, {(-2,-4)/f}, {(-1,0)/g}, {(-3.3,0)/h}}
\edges
{a/c, a/e, a/g, b/g, b/f, b/d, c/f, c/e, f/d}
\path (-2.5,-0.6) node[] {$s$};
\path (-7,4) node[blackvertex] {};
\path (-7,-4) node[blackvertex] {};
\path (-1,0) node[blackvertex] {};
\path (1.8,-3.9) node[] {$G'$};
}
\caption{Illustration of redundancy. The left diagram shows a graph $G$.
The middle diagram shows $G^*$ which is given by $G$ with edges added between strongly linked
vertices. The neighbours of vertex $s$ in $G$ are indicated by filled symbols in $G^*$; the subgraph of $G^*$ induced by these vertices is a complete graph.
The right diagram shows $G'$, a graph in which each edge indicates a weak link in $G$. Here the filled vertices induce an empty graph.
}
\label{f.clique}
\end{figure}

\lemm{ \label{l.isored}
If $u$ and $v$ are redundant in $G$ and $d(u,v) > 2$, then
$v$ is redundant in $G-u$.}
\begin{corollary}
If $S$ is a set of redundant
vertices whose members are all at distance $>2$ from each other,
then deletion of all the vertices in $S$ is isometric.
\end{corollary}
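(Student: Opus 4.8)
The plan is to establish Lemma~\ref{l.isored} first, after which the Corollary is a short induction. To prove the Lemma, fix vertices $a,b \in V\setminus\{u,v\}$. Deleting a vertex never decreases distances, and since $u$ is redundant $d_{G-u}(a,b)=d_G(a,b)$; hence $d_{(G-u)-v}(a,b)\ge d_{G-u}(a,b)=d_G(a,b)$, so it will suffice to exhibit a shortest $a$-$b$ path of $G$ that avoids \emph{both} $u$ and $v$, because such a path lies in $(G-u)-v$ and forces equality throughout, which is exactly the assertion that $v$ is redundant in $G-u$.

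To build such a path I would first use that $v$ is redundant: then $d_{G-v}(a,b)=d_G(a,b)$, so $G$ has a shortest $a$-$b$ path $P$ with $v\notin P$. If $u\notin P$ we are done, so suppose $u\in P$; as $u\ne a,b$ it sits between two consecutive vertices $n,m$ of $P$. Because $P$ is a shortest path, $n$ and $m$ are non-adjacent (an edge $nm$ would shorten $P$), so $d_G(n,m)=2$, realised by $n$-$u$-$m$. Now invoke that $u$ is redundant: $d_{G-u}(n,m)=d_G(n,m)=2$, so $n$ and $m$ have a common neighbour $x\ne u$. Replacing the segment $n$-$u$-$m$ of $P$ by $n$-$x$-$m$ yields an $a$-$b$ walk of the same length $d_G(a,b)$. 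The single point where the hypothesis $d(u,v)>2$ enters — and, I expect, the main obstacle — is the observation that $x\ne v$: since $n$ is adjacent to $u$, adjacency of $v$ to $n$ would give $d_G(u,v)\le 2$, contrary to hypothesis. Thus the new walk contains neither $u$ (which occurred on $P$ only once, now removed) nor $v$ (absent from $P$, and $x\ne v$); and if $x$ already lay on $P$ the walk would repeat a vertex and hence contain a strictly shorter $a$-$b$ walk, impossible since its length equals $d_G(a,b)$, so it is in fact a simple path. This is the desired shortest $a$-$b$ path avoiding $u$ and $v$, completing the Lemma.

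For the Corollary I would argue by induction on $|S|$. The cases $|S|\le 1$ are immediate, deletion of a single redundant vertex being isometric by definition. If $|S|\ge 2$, pick $v\in S$: deletion of $v$ from $G$ is isometric, and in $G-v$ the remaining set $S\setminus\{v\}$ still consists of redundant vertices — each $u\in S\setminus\{v\}$ is redundant in $G-v$ by Lemma~\ref{l.isored} (with the roles of $u$ and $v$ interchanged), using $d_G(u,v)>2$ — which are still pairwise at distance $>2$ since distances among $V\setminus\{v\}$ are unchanged. By the inductive hypothesis, deleting $S\setminus\{v\}$ from $G-v$ is isometric, and composing the two isometric operations $G\to G-v\to G-S$ shows that distances among $V\setminus S$ are preserved, i.e. deletion of $S$ is isometric.
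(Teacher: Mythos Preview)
Your proof is correct. The route differs from the paper's, though, and the comparison is worth a word.

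The paper argues \emph{locally}: since $d(u,v)>2$, no neighbour of $v$ is adjacent to $u$, so deleting $u$ removes no edge incident on $N(v)$; hence every pair in $N(v)$ that was strongly linked in $G$ remains strongly linked in $G-u$, and $v$ stays redundant. The corollary is then asserted to follow (by the same induction you spell out). Your argument is instead a \emph{path-surgery} one: take a shortest $a$--$b$ path avoiding $v$ (using redundancy of $v$), and if it hits $u$, reroute the single step $n$--$u$--$m$ through an alternative common neighbour $x$ (using redundancy of $u$), noting that $x\ne v$ because $d(u,v)>2$. Both proofs pivot on the same consequence of the distance hypothesis---that the reroute near $u$ cannot involve $v$---but the paper's version packages this via the strong-link characterisation already established, while yours is self-contained and works straight from the definition of redundancy. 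The paper's proof is shorter given its machinery; yours is more explicit and makes the induction for the corollary transparent. A minor remark: you don't actually need the replacement walk to be a simple path---a walk of length $d_G(a,b)$ in $G-u-v$ already gives the required inequality---so the final sentence of your lemma proof, while correct, could be dropped.
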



\begin{proof}
The deletion of $u$ only removes
from $G$ edges incident on $u$. None of these edges is incident on a
neighbour of $v$ because $d(u,v) > 2$. Therefore no neighbour of $v$
loses an edge when $u$ is deleted. 
Therefore any pair of those neighbours which were strongly linked 
in $G$ remain strongly linked in $G-u$. Hence they are all strongly
linked, hence $v$ is redundant in $G-u$. The corollary then
follows. \qed
\end{proof}

Let $N_k(v)$ be the set of vertices at distance $k$ from $v$. Let
$w_k(u,v)$ be the number of $k$-walks between vertices $u$ and $v$. 

\lemm{
Vertex $s$ is $k$'th-order redundant iff $w_{2k}(u,v) \ne w_k(u,s) w_k(s,v)$
$\forall \{ u,v : u \in N_k(s), v \in N_k(s), d(u,v) = 2k$\}.
{\em 
(For example, $s$ is 1st-order redundant iff the number of 2-walks
between each non-adjacent pair of its neighbours is not equal to~1.)}
}


\begin{proof}
Vertex $s$ is redundant iff its deletion will increase the distance
between two members of $N_k(s)$ which are separated by a distance $2k$
(the distance between them cannot be greater than $2k$ or
they would not both be in $N_k(s)$). This increase will happen iff
all walks of length $2k$ between two such vertices pass through $s$.
For any pair of vertices $u,v$ the number of walks of length $2k$ between 
$u$ and $v$ via $s$ is equal to $\sum_{t=1}^{2k-1} w_t(u,s) w_{2k-t}(s,v)$
(since one must first walk from $u$ to $s$ and then from $s$ to $v$).
The conditions of the lemma stipulate that $w_t(u,s) = 0$ for
$t < k$ and $w_{2k-t}(s,v) = 0$ for $2k-t < k$. Therefore the only non-zero
term in the sum is $w_k(u,s) w_{k}(s,v)$.
Now $w_{2k}(u,v)$ is by definition the total number of walks of length $2k$
between $u$ and $v$. Therefore all walks of length $2k$ from $u$ to $v$ pass via
$s$ iff $w_{2k}(u,v) = w_k(u,s) w_{k}(s,v)$. Hence if this equality does
not hold then either there are fewer or more walks. But the
conditions stipulate that there are at least this number, because $u$ and
$v$ are at distance $k$ from $s$. Hence there is one or more walk (between
$u$ and $v$) of length $2k$ not involving $s$ iff the inequality stipulated 
in the lemma holds. \qed
 \end{proof}

\section{Weak and strong graphs}  \label{s.ws}

We now discuss those graphs in which either all vertices are redundant
or none are. We shall use the terminology `strongly linked graph' for a graph
in which all vertices are redundant (reflecting the role of strong links
in redundancy) and `weakly linked graph' for a graph in which no
vertices are redundant (reflecting the role of weak links in such graphs).
We will normally shorten this terminology to `strong graph' and `weak graph'.
Neither a strong graph nor a weak graph need necessarily be connected 
(that is, have just one connected component) but weak graphs have
no isolated vertices.

Most graphs are neither weak nor strong.
Trees with more than two vertices are neither weak nor strong.
A vertex transitive graph must be either weak or strong.

\subsection{Strong graphs}

\begin{lemma} \label{l.strong}
A graph has all vertices redundant iff it has no weakly linked
pairs of vertices.
\end{lemma}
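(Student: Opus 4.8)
The plan is to prove the biconditional in Lemma \ref{l.strong} by establishing each direction using the ``useful way to think about redundant vertices'' stated just before Lemma \ref{l.isored}, namely: a vertex $s$ is redundant iff the subgraph of $G$ induced by $N(s)$, with edges added between pairs that are strongly linked in $G$, is complete. Equivalently (via the $G'$ construction), $s$ is redundant iff no two of its neighbours are weakly linked. I would take that equivalence as already justified in the text and use it as the engine of the proof.

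For the forward direction, suppose $G$ has all vertices redundant but, for contradiction, has a weakly linked pair $\{u,v\}$. By the definition of weakly linked, $u$ and $v$ are non-adjacent and have exactly one common neighbour; call it $w$, so that $u,v \in N(w)$. Since $w$ is redundant, no two of its neighbours are weakly linked --- but $u$ and $v$ are two of its neighbours and they are weakly linked, a contradiction. Hence $G$ has no weakly linked pairs.

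For the converse, suppose $G$ has no weakly linked pairs, and let $s$ be any vertex. Then in particular no two neighbours of $s$ form a weakly linked pair (there are no weakly linked pairs at all), so by the characterisation above $s$ is redundant. Since $s$ was arbitrary, every vertex of $G$ is redundant.

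The only real content here is the characterisation of redundancy in terms of weak links among neighbours, which the paper has already set up; once that is in hand, both directions are a one-line observation, the key point being that a weakly linked pair $\{u,v\}$ always sits inside $N(w)$ for its unique common neighbour $w$, so its existence is exactly what a redundant vertex forbids among its own neighbours. I expect no obstacle beyond making sure the ``unique common neighbour'' is correctly extracted from the definition of a weak link (a $2$-walk between non-adjacent $u$ and $v$ is precisely a common neighbour, and ``one and only one $2$-walk'' means a unique common neighbour). \qed
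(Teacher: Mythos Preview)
Your proof is correct and is essentially the same as the paper's, just spelled out in more detail: the paper simply says that a weakly linked pair makes ``the vertex adjacent to that pair'' non-redundant (your $w$), and conversely that no weak links means every neighbourhood is strongly linked so every vertex is redundant. Your only addition is making explicit that the unique common neighbour $w$ is what ``the vertex adjacent to that pair'' means, which is a helpful clarification.
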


\begin{corollary}
A graph has all vertices redundant iff 
no element of $(a^2 + 2(a+I))$ is equal to 1, where $a$ is the adjacency matrix
and $I$ the identity matrix. 
\end{corollary}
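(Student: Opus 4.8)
The plan is to translate the matrix condition entrywise and then match it against the definition of a weakly linked pair, after which Lemma~\ref{l.strong} finishes the job. First I would record what the entries of $M = a^2 + 2(a+I)$ mean. For distinct vertices $u,v$ we have $[a^2]_{u,v} = |N(u)\cap N(v)| = w_2(u,v)$, the number of $2$-walks between $u$ and $v$; meanwhile $[a]_{u,v}$ is $1$ if $u$ and $v$ are adjacent and $0$ otherwise, and $[I]_{u,v}=0$. Hence $[M]_{u,v} = w_2(u,v)$ when $u$ and $v$ are non-adjacent, and $[M]_{u,v} = w_2(u,v)+2$ when they are adjacent.

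Next I would run through the cases from the definition of weak/strong linkage. If $u$ and $v$ are adjacent, then $[M]_{u,v}\ge 2$. If they are non-adjacent with two or more common neighbours, again $[M]_{u,v}\ge 2$. If they are non-adjacent with no common neighbour, then $[M]_{u,v}=0$. The only remaining possibility---non-adjacent with exactly one common neighbour, i.e.\ exactly one $2$-walk---is precisely the definition of a weakly linked pair, and in that case $[M]_{u,v}=1$. So an off-diagonal entry of $M$ equals $1$ exactly when the corresponding pair of vertices is weakly linked.

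The one point requiring a separate check is the diagonal: there $[a^2]_{u,u}=d_u$, $[a]_{u,u}=0$ and $[I]_{u,u}=1$, so $[M]_{u,u}=d_u+2\ge 2$, which is never $1$. Therefore ``no element of $M$ equals $1$'' is equivalent to ``no off-diagonal element of $M$ equals $1$'', which by the previous paragraph is equivalent to ``$G$ has no weakly linked pair of vertices'', which by Lemma~\ref{l.strong} is equivalent to ``every vertex of $G$ is redundant''. This is the claimed equivalence. I do not expect a genuine obstacle here; the only thing to be careful about is not to overlook the diagonal entries, since the corollary speaks of ``no element'' of the matrix, and so one must verify that the diagonal can never accidentally take the value $1$.
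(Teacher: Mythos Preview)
Your argument is correct and follows the same route the paper intends: the paper simply asserts ``the corollary is straightforward'' after Lemma~\ref{l.strong}, and what you have written is exactly the straightforward entrywise translation of that lemma into the matrix condition. Your explicit check of the diagonal entries is a detail the paper glosses over but which is indeed needed for the ``no element'' formulation to be clean.
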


\begin{proof}
If there were a weakly linked pair then the vertex adjacent to 
that pair would not be redundant; if there is no weakly linked pair
then every vertex neighbourhood is strongly linked, therefore every
vertex is redundant. The corollary is straightforward. \qed
\end{proof}

\begin{lemma} \label{str_simple}
In a graph of order $n$ 
with all vertices redundant, (a ``strong graph"):
\begin{enumerate}
\item There are no pendants (vertices of degree 1) except in
components equal to P2.
\item For all $u$, any vertex $v \notin N(u)$ which is
adjacent to a member of $N(u)$ is also adjacent to another member
of $N(u)$. 
\item Neighbours of a degree-2 vertex are twins.
\item If the graph is connected then the diameter is at most $\lfloor n/2 \rfloor$.
\end{enumerate}
\end{lemma}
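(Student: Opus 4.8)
The plan is to prove each of the four items in turn, using Lemma \ref{l.strong} (all vertices redundant iff no weakly linked pairs) as the workhorse throughout. Recall that $u,v$ are weakly linked iff they are non-adjacent and have exactly one common neighbour; a strong graph contains no such pair.

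For item (i), suppose $p$ is a pendant in a component $C$ with more than two vertices. Let $w$ be its unique neighbour. Since $|C|>2$, $w$ has another neighbour $x\ne p$. Then $p$ and $x$ are non-adjacent (as $p$ has degree $1$) and their only common neighbour is $w$ (again since $p$ has no other neighbour), so $\{p,x\}$ is weakly linked, contradicting Lemma \ref{l.strong}. For item (ii), fix $u$ and take $v\notin N(u)$ adjacent to some $m\in N(u)$. If $v$ had no other neighbour in $N(u)$, I claim $\{v,u\}$ would be weakly linked: they are non-adjacent by hypothesis, and $m$ is a common neighbour; I should check $m$ is the \emph{only} common neighbour, i.e. $N(u)\cap N(v)=\{m\}$ — but that is exactly the assumption ``$v$ is adjacent to no other member of $N(u)$''. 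Hence $\{u,v\}$ is weakly linked, contradiction. For item (iii), let $v$ have degree $2$ with neighbours $a,b$. By Lemma \ref{l.drsimple}-type reasoning (actually the lemma ``a vertex of degree $2$ is redundant iff it is surrounded'', item (iv) of the simple-observations lemma in Section \ref{s.dr}), $v$ redundant forces $v$ surrounded, so $N(v)=\{a,b\}\subset N[a]$ and $\subset N[b]$, meaning $a\sim b$ — wait, that only gives adjacency; to get that $a,b$ are twins I would instead argue directly: $v$ redundant means $a$ and $b$ are strongly linked, so either $a\sim b$ or they have a second common neighbour. Then apply item (ii) with $u=v$ (degenerate) — better, observe that any common neighbour story plus degree-$2$-ness forces, via redundancy of \emph{every} vertex, that $a$ and $b$ have identical neighbourhoods; I'll need to chase the weakly-linked condition applied to pairs involving $a$, $b$ and each of their neighbours. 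This is the step I expect to be fiddliest, and I would lean on item (ii) applied repeatedly: for any $x\in N(a)$, $x$ is adjacent to a second member of $N(a)$, and combined with the degree-$2$ constraint at $v$ this pins down $N(a)=N(b)$.

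For item (iv), the diameter bound, suppose $G$ is connected of order $n$ with diameter $D$, and let $P=v_0v_1\cdots v_D$ be a shortest path realizing the diameter. The key observation is that consecutive interior pairs $\{v_{i-1},v_{i+1}\}$ are non-adjacent (shortest path) with common neighbour $v_i$; redundancy forces a \emph{second} common neighbour $u_i$ for each such pair, and $u_i\notin P$ (if $u_i=v_j$ then $v_j$ is adjacent to both $v_{i-1}$ and $v_{i+1}$, forcing $|i-1-j|\le 1$ and $|i+1-j|\le 1$, i.e. $j=i$, excluded). Moreover the $u_i$ for non-overlapping index ranges are distinct: if $u_i=u_j$ with $j\ge i+2$ then $u_i$ is adjacent to $v_{i-1}$ and $v_{j+1}$, giving a walk $v_{i-1}u_iv_{j+1}$ of length $2$, so $d(v_{i-1},v_{j+1})\le 2$, but $d(v_{i-1},v_{j+1})=(j+1)-(i-1)=j-i+2\ge 4$ along the shortest path — contradiction. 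So choosing $i=1,3,5,\dots$ I get $\lfloor D/2\rfloor$ or so distinct off-path vertices, plus the $D+1$ path vertices, giving roughly $D+1+\lfloor D/2\rfloor \le n$, hence $D \le \lfloor 2(n-1)/3\rfloor$ — which is weaker than the claimed $\lfloor n/2\rfloor$. To reach $\lfloor n/2\rfloor$ I instead pair up \emph{every} interior vertex: the plan is to show $n \ge 2D$ by producing, for the $D-1$ interior vertices $v_1,\dots,v_{D-1}$, a matching of each to a private off-path neighbour. The hard part will be guaranteeing these off-path witnesses can be chosen distinct for \emph{all} consecutive pairs at once (adjacent index ranges overlap, so the simple distance argument above doesn't separate $u_i$ from $u_{i+1}$); I would handle this by a more careful counting, e.g. associating to each edge $v_iv_{i+1}$ of $P$ a distinct vertex using that $\{v_{i-1},v_{i+2}\}$ is at distance $3$ hence they share no common neighbour, forcing the witnesses for overlapping pairs into disjoint ``slots'', and then verifying the bookkeeping gives $2D \le n$, i.e. $D \le \lfloor n/2\rfloor$. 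That distinctness/counting argument for item (iv) is the main obstacle; the other three items are short applications of Lemma \ref{l.strong}.
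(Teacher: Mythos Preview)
Items (i) and (ii) are correct and match the paper.

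For item (iii), your instinct to apply item (ii) with $u$ equal to the degree-$2$ vertex is exactly right and finishes the proof cleanly; you just stopped short of executing it. With $N(v)=\{a,b\}$, item (ii) says any $x\notin\{a,b\}$ adjacent to $a$ is also adjacent to $b$, and symmetrically; hence $N(a)\setminus\{v,a,b\}=N(b)\setminus\{v,a,b\}$, and since $v\in N(a)\cap N(b)$ while $a\sim b\iff b\sim a$, this forces $N(a)=N(b)$ or $N[a]=N[b]$. The paper instead argues by contradiction, exhibiting a weak link $\{u,x\}$ for some $x\in N(a)\setminus N[b]$ and then disposing of the residual case where $a$ itself has degree $2$ via a cut-vertex argument. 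Your route, once written out, is shorter.

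For item (iv), your off-path-witness approach is different from the paper's and does work; you were one observation away. Your distance argument \emph{does} separate $u_i$ from $u_{i+1}$: if $u_i=u_{i+1}$ then this vertex is adjacent to $v_{i-1}$ (as $u_i$) and to $v_{i+2}$ (as $u_{i+1}$), so $d(v_{i-1},v_{i+2})\le 2$, contradicting $d(v_{i-1},v_{i+2})=3$. (In your own notation this is the case $j=i+1$, where $j-i+2=3>2$ still gives the contradiction.) Thus all $D-1$ witnesses $u_1,\ldots,u_{D-1}$ are distinct and off $P$, giving $n\ge (D+1)+(D-1)=2D$. The paper argues differently: since each interior vertex of a fixed geodesic is redundant, some other geodesic avoids it, so the intersection of all geodesic interiors is empty; it then claims the union of these interiors has size at least $2(D-1)$, which (though not spelled out) follows because geodesics pick exactly one vertex from each of the $D-1$ distance levels, and each level must therefore contain at least two vertices. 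Your argument is more explicit; the paper's is terser but leans on the layered structure of geodesics.
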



\begin{proof}
(i) If there were a pendant its neighbour would not be redundant (except
in the graph P2). 
(ii) Is so because otherwise there would be a weak link, such that the
member of $N(u)$ adjacent to $v$ would not be redundant (c.f. also
lemma \ref{l.strong}).
(iii)
Let $v,w$ be the neighbours of a degree-2 vertex $u$ and suppose $v$
has no twin and degree $d_v > 1$.
In this case one of $v$ and $w$ is not redundant because
either there will be a weak
link between $u$ and a neighbour of $v$ not in $N[w]$, or $v$ itself
has degree 2 with its other edge going to $w$. But in the latter case
$w$ is a cut-vertex if it has degree above 2, which cannot happen in a strong graph. The alternative is that $\{u,v,w\}$ forms a component $K_3$ but in that case
$v$ and $w$ are twins. 
(iv) Consider a pair of vertices at the maximal distance and let $p_1$ be
the set of vertices on a shortest path between them, excluding the vertices
themselves (thus for diameter $D$, $|p_1| = D-1$).  
To ensure that vertices on this path are
not redundant, we require that for each $v \in p_1$ there is another
shortest path which does not include $v$. This requires that the set of
shortest paths (between the given vertices) satisfies $p_1 \cap p_2 \cap
\ldots p_k = \emptyset$. But this implies $|p_1 \cup p_2 \cup \ldots p_k|
\ge 2 |p_1|$. It follows that $n \ge 2 |p_1| + 2 = 2D$.
\qed
\end{proof}



\begin{figure}
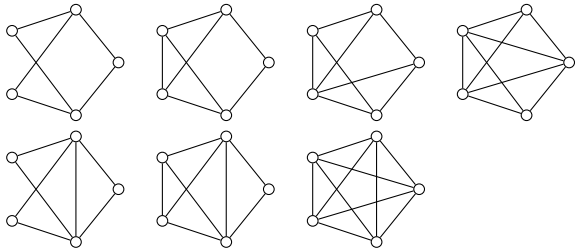

\centering{
\begin{tabular}{cccc}
\tikz[scale=0.2]{\vertices{{( 7.7, 4.2)/a},{( 0.7, 2.1)/b},{( 0.7, 6.3)/c},
{( 4.9, 7.7)/d},{( 4.9, 0.7)/e}}                                           
\edges{a/d,a/e,b/d,b/e,c/d,c/e}                                            
}                                                                          
 & 
\tikz[scale=0.2]{\vertices{{( 4.9, 7.7)/a},{( 4.9, 0.7)/b},{( 7.7, 4.2)/c},
{( 0.7, 6.3)/d},{( 0.7, 2.1)/e}}                                           
\edges{a/c,a/d,a/e,b/c,b/d,b/e,d/e}                                        
}                                                                          
 & 
\tikz[scale=0.2]{\vertices{{( 4.9, 7.7)/a},{( 7.7, 4.2)/b},{( 0.7, 6.3)/c},
{( 0.7, 2.1)/d},{( 4.9, 0.7)/e}}                                           
\edges{a/b,a/c,a/d,b/d,b/e,c/d,c/e,d/e}                                    
}                                                                          
 & 
\tikz[scale=0.2]{\vertices{{( 7.7, 4.2)/a},{( 4.9, 7.7)/b},{( 0.7, 6.3)/c},
{( 4.9, 0.7)/d},{( 0.7, 2.1)/e}}                                           
\edges{a/b,a/c,a/d,a/e,b/c,b/e,c/d,c/e,d/e}                                
}                                                                          
\\ 
\tikz[scale=0.2]{\vertices{{( 4.9, 7.7)/a},{( 7.7, 4.2)/b},{( 0.7, 6.3)/c},
{( 0.7, 2.1)/d},{( 4.9, 0.7)/e}}                                           
\edges{a/b,a/c,a/d,a/e,b/e,c/e,d/e}                                        
}                                                                          
 & 
\tikz[scale=0.2]{\vertices{{( 0.7, 2.1)/a},{( 4.9, 7.7)/b},{( 0.7, 6.3)/c},
{( 4.9, 0.7)/d},{( 7.7, 4.2)/e}}                                           
\edges{a/b,a/c,a/d,b/c,b/d,b/e,c/d,d/e}                                    
}                                                                          
 & 
\tikz[scale=0.2]{\vertices{{( 7.7, 4.2)/a},{( 4.9, 7.7)/b},{( 0.7, 6.3)/c},
{( 0.7, 2.1)/d},{( 4.9, 0.7)/e}}                                           
\edges{a/b,a/c,a/d,a/e,b/c,b/d,b/e,c/d,c/e,d/e}                            
}                                                                          
 & 
\end{tabular}

}
\caption{Connected strong graphs of order 5. All can be constructed
as described in the proof of lemma \ref{l.strongsub}; the first four
using weak twins, the last three using strong twins.}
\label{f.strong5}
\end{figure}

\begin{figure}
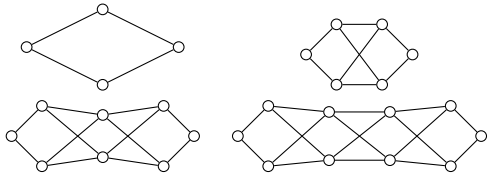

\centering{
\begin{tabular}{cc}
\tikz[scale=0.2]{\vertices{{( 11, 3)/a},{( 6, 5.5)/b},{( 1, 3)/c},
{( 6, 0.5)/d}}                                                    
\edges{a/b,a/d,b/c,c/d}                                           
}                                                                 
 & 
\tikz[scale=0.2]{\vertices{{( 7, 8)/a},{( 7, 4)/b},{( 4, 4)/c},
{( 2, 6)/d},{( 4, 8)/e},{( 9, 6)/f}}                           
\edges{a/c,a/e,a/f,b/c,b/e,b/f,c/d,d/e}                        
}                                                              
 \\ 
\tikz[scale=0.2]{\vertices{{( 4, 10)/a},{( 4, 6)/b},{( 8, 9.4)/c},
{( 12, 10)/d},{( 8, 6.6)/e},{( 2, 8)/f},{( 12, 6)/g},{( 14, 8)/h}}
                                                                  
\edges{a/c,a/e,a/f,b/c,b/e,b/f,c/d,c/g,d/e,d/h,e/g,g/h}           
}                                                                 
 & 
\tikz[scale=0.2]{\vertices{{( 4, 10)/a},{( 4, 6)/b},{( 8, 9.6)/c},    
{( 12, 9.6)/d},{( 8, 6.4)/e},{( 2, 8)/f},{( 12, 6.4)/g},{( 16, 10)/h},
{( 16, 6)/i},{( 18, 8)/j}}                                            
\edges{a/c,a/e,a/f,b/c,b/e,b/f,c/d,c/g,d/e,d/h,d/i,e/g,g/h,g/i,       
h/j,i/j}                                                              
}                                                                     
\end{tabular}

}
\caption{Strong graphs with diameter $n/2$ and no strong twins.
There is only one such graph at each even $n$. The first four of an 
infinite sequence are shown.}
\label{f.strlink}
\end{figure}

The connected strong graphs on 5 vertices are shown in figure \ref{f.strong5}.
When considering these and larger graphs, one wishes to know
what generic properties they have (beside the defining property). 
For example, are there any excluded subgraphs? The answer to this
is no, as the following construction shows:

\begin{lemma} \label{l.strongsub}
\begin{enumerate} 
\item Every graph of order $n$
is an induced subgraph of a strong graph of order $n+2$.
\item Every graph of order $n$ with $k$ non-redundant vertices
is an isometric induced subgraph of some strong graph of order $\le n+k$.
\end{enumerate}
\end{lemma}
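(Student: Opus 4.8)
The plan is to prove both parts by repeatedly \emph{adjoining twins}, using two facts established above: adjoining a twin to a graph is an isometric operation (it is the reverse of the operation in Theorem~\ref{th.isosurround}(i)), and a vertex $s$ fails to be redundant precisely when some pair $a,b\in N(s)$ is weakly linked --- equivalently, combining Lemma~\ref{l.strong} with the observation that the number of $2$-walks between distinct $u,v$ equals $|N(u)\cap N(v)|$, precisely when $s$ is the unique common neighbour of some non-adjacent pair of vertices.

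For part (ii) I would induct on the number $k$ of non-redundant vertices. If $k=0$ there is nothing to do. Otherwise pick a non-redundant vertex $v$ and adjoin a new vertex $v'$ with neighbourhood $N(v)$, a weak twin of $v$ (a strong twin, with closed neighbourhood $N[v]$, works just as well); call the result $G_1$. Then $G_1$ contains $G$ as an induced subgraph, since no edge has been added inside $V(G)$, and $G_1$ is isometric over $G$ because deleting $v'$ from $G_1$ deletes a twin. The crucial claim is that the set of non-redundant vertices of $G_1$ is contained in that of $G$ with $v$ removed, so that the count strictly decreases. To see this, track how the number of common neighbours (equivalently, $2$-walks) of a pair changes when $v'$ is adjoined: only edges are added, so no strong link is destroyed, and the only pairs that gain a common neighbour are those contained in $N(v)$, each of which gains $v'$ but already had $v$ as a common neighbour --- so those pairs pass from ``at least one'' to ``at least two'' common neighbours. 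Hence every weakly linked pair of $G$ lying inside $N(v)$ (exactly the pairs whose unique common neighbour is $v$) becomes strongly linked; every other weak link of $G$ survives with the same unique common neighbour; and any weak link of $G_1$ absent from $G$ must have $v'$ as one of its two vertices, where one checks that $\{v',w\}$ is weakly linked in $G_1$ iff $\{v,w\}$ is weakly linked in $G$, witnessed in both cases by the same vertex $c$, and $c\ne v$ since $v$ is never a common neighbour of a pair containing $v$. Thus in $G_1$ no weak link is witnessed by $v$, so $v$ has become redundant, and every weak link of $G_1$ is witnessed by a vertex that was non-redundant in $G$; this is the claim, and the inductive hypothesis applied to $G_1$ then yields a strong graph in at most $k$ further additions, of order at most $n+k$, containing $G$ isometrically.

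For part (i) I would first make every original vertex redundant in a single step. Adjoin to $G$ a vertex $x$ adjacent to all of $V(G)$. Then for each $u\in V(G)$, any two vertices of $N(u)$ share the common neighbours $u$ and $x$, and $x$ is adjacent to everything, so the neighbourhood of $u$ in the enlarged graph is entirely strongly linked; hence $x$ is the only vertex of $G+x$ that can fail to be redundant. Now adjoin a twin of $x$: a strong twin $y$ produces two mutually adjacent universal vertices, so every pair of the final graph is strongly linked (two original vertices share $x$ and $y$; everything else is adjacent to both $x$ and $y$), and the graph is strong by Lemma~\ref{l.strong}. Two vertices have been added and $G$ is an induced subgraph, giving order $n+2$. (A weak twin of $x$ serves equally well once $G+x$ is not already strong; and if $G$ has diameter at most $2$ then $G+x$ is already strong and only one vertex need be added.)

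The step I expect to be the main obstacle is the bookkeeping inside part (ii): one must verify that adjoining the twin of $v$ does not manufacture a non-redundant vertex somewhere else, and this is exactly where the characterisation of non-redundancy (``sole common neighbour of a non-adjacent pair'') has to be combined carefully with the monotonicity of $2$-walk counts under edge addition. The remaining ingredients --- that the enlarged graph contains $G$ as an induced subgraph, and that the enlargement is isometric by iterating Theorem~\ref{th.isosurround}(i) and deleting the added vertices in the reverse of the order in which they were adjoined --- are routine.
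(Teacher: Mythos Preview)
Your proof is correct and uses the same construction as the paper: for (i) adjoin two vertices adjacent to all of $V(G)$ (the paper makes them weak twins, you allow either), and for (ii) adjoin a twin to each non-redundant vertex. The only difference is presentational --- the paper performs all $k$ twin-splittings at once and argues somewhat informally that no previously redundant vertex loses that status, whereas you proceed one twin at a time by induction and track the set of weak-link witnesses explicitly; your bookkeeping is in fact more thorough than the paper's own argument for~(ii).
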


\begin{proof}
(i)
To make a strong graph starting from any graph $G$,
add two further vertices, making each of them adjacent to all the vertices in
$G$. The resulting graph is strong since the two added vertices
surround all the other vertices and each other. 
(ii) To make a strong graph of which $G$ is an isometric induced subgraph,
split each non-redundant vertex into a pair of twins. For each vertex
$v$ involved in this operation, the resulting
twins are clearly redundant, and any $w \in N(v)$ which was
redundant remains so, because for each 2-walk from 
a neighbour of $w$ to $v$ there will now also be a 2-walk to the
newly introduced twin of $v$. But since $w$ was redundant those
2-walks come in pairs (making strong links) where necessary, 
and therefore so will the new 2-walks. 
This suffices to guarantee that
the new weak links come in pairs, making strong links, such that
$w$ remains redundant.
\qed
\end{proof}

All the graphs shown in figure \ref{f.strong5} can be generated
from the four graphs of order 3 by using part (i) of this construction. 
This method produces graphs of low diameter---at most 2. The method of
part (ii) requires further vertices but preserves the diameter.

Let us now focus attention on graphs of diameter greater than 2.
These first appear at order 6. Figure
\ref{f.strlink} shows the first four of an infinite sequence of graphs of even
order having diameter $n/2$---the maximum possible (lemma \ref{str_simple}iv). This structure is well known in the study
of the Shannon game where it is sometimes called a {\em chain}.\cite{06Hayward} 
If the end vertices are coloured and then two players take
successive turns, one colouring and the other deleting one
uncoloured vertex per turn, then for graphs of this kind the colourer
can guarantee to colour a complete path between the ends 
whether they play first or second.

Observe next that the addition or removal of an edge between
twins does not change any distance in the
graph except that between those vertices (changing it from 
2 to 1 or 1 to 2).
Therefore for every strong graph containing weak twins there is
a further strong graph containing strong twins with only moderately
different properties. It follows that to understand the structure of
strong graphs in general it is sufficient to understand the structure
of strong graphs having no strong twins. 

A numerical survey revealed that 
the fraction of all connected graphs of given order which are
strong falls approximately exponentially with $n$, for $n < 12$,
falling below
$10\%$ at $n=9$. The fraction of connected graphs with diameter above 2
which are strong falls below $1\%$ at $n=10$. In this sense strong
graphs are rare (but not nearly as rare as weak graphs). For
further information on their abundance see section \ref{s.gnp}.

\subsubsection{Redundant and not surrounded}

\begin{figure}
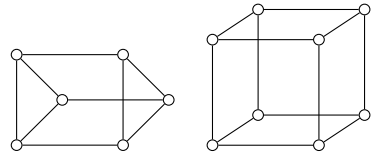

\centering{
\begin{tabular}{cc}
\tikz[scale=0.2]{\vertices{{( 9, 2)/a},{( 2, 2)/b},{( 12, 5)/c},
{( 2, 8)/d},{( 9, 8)/e},{( 5, 5)/f}}                            
\edges{a/b,a/c,a/e,b/d,b/f,c/e,c/f,d/e,d/f}                     
}                                                               
 & 
\tikz[scale=0.2]{\vertices{{( 0, 1)/a},{( 0, 8)/b},{( 7, 1)/c}, 
{( 7, 8)/d},{( 3, 3)/e},{( 3, 10)/f},{( 10, 3)/g},{( 10, 10)/h}}
                                                                
\edges{a/b,a/c,a/e,b/d,b/f,c/d,c/g,d/h,e/f,e/g,f/h,g/h}         
}                                                               
\end{tabular}

}
\caption{Two graphs with all vertices redundant, none surrounded.
The triangular prism is the smallest such graph. The cube is the
smallest such graph with diameter above 2.}
\label{f.rns}
\end{figure}

An interesting subclass of strong graphs is those that have no
surrounded vertices. Figure \ref{f.rns} shows the smallest two
examples. Table \ref{t.rs} lists some named graphs which have
this property. 

For 8 vertices there are 21 graphs with all vertices redundant
and none surrounded. All but two of them have diameter 2.
Those with diameter 3 are the cube and the cube plus two
parallel face-diagonals.

\begin{table}
\begin{tabular}{lcc}
   graph & vertices & diameter \\
\hline
triangular prism & 6 & 2 \\
$K_3 \otimes K_3$ & 9 & 2 \\
Shrikhande & 16 & 2 \\
Clebsch (folded 5-cube)   & 16       & 2 \\
Kneser($n,2$) for $n > 5$ & $^n C_2$ & 2 \\
Kneser($n,3$) for $n > 8$ & $^n C_3$ & 2 \\
crown $n > 3$             & $2n$     & 3 \\
icosahedron               & 12       & 3 \\
distance-3 graph of Heawood  & 14 & 3 \\
Gosset         & 56 & 3 \\
Hadamard       & 32 &  4 \\
hexacode       & 36 & 4 \\
Hadamard       & 48 & 4 \\ 
Suetake        & 72 & 4 \\
co-cliques in Hoffman-Singleton\!\!\!&\!\!\! 100 & 4 \\
doubled Gewirtz  & 112 & 5 \\
many Johnson graphs \\
$k$-cube & $2^k$ & $k$ 
\end{tabular}
\caption{Example graphs with all vertices redundant and none surrounded.}
\label{t.rs}
\end{table}

The $k-$cube graph has all vertices redundant and not surrounded. It has
$2^k$ vertices and diameter $k$.

\begin{conjecture}
The maximum diameter of a graph of order $n$ 
with all vertices redundant and none
surrounded is $\lfloor \log_2(n) \rfloor$; for each $n$ equal to a power
of 2 this maximum is realised only by the $k$-cube and graphs of
which the $k$-cube is a subgraph. 
\end{conjecture}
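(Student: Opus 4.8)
The plan is to prove the two parts of the conjecture separately, beginning with the upper bound on diameter and then the characterisation of extremal graphs.

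\textbf{Upper bound.} Let $G$ be a connected graph of order $n$ in which every vertex is redundant and none is surrounded. Pick two vertices $x,y$ at distance $D = \diam(G)$. The strategy is to show that the ``diametral layers'' $L_i = \{v : d(x,v) = i\}$ for $i = 0,1,\dots,D$ each force at least one vertex, but that the no-surrounded and all-redundant conditions together force certain layers to contain at least two vertices, pushing $n \ge 2D - O(1)$ up to $n \ge 2^D$. The weaker bound $D \le \lfloor n/2 \rfloor$ is already lemma~\ref{str_simple}(iv), so something genuinely stronger is needed; the natural source of exponential growth is that redundancy of a vertex $v$ in an intermediate layer requires \emph{two} internally disjoint shortest $x$--$y$ paths through a neighbourhood of $v$, and iterating a doubling argument along the BFS tree should give $|L_i \cup L_{i+1}| \ge 2^{\min(i, D-i)}$ or similar. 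First I would make precise the local doubling step: if $v \in L_i$ lies on a shortest $x$--$y$ path, redundancy of $v$ gives a 2-walk between its two path-neighbours (one in $L_{i-1}$, one in $L_{i+1}$) not through $v$, and the no-surrounded hypothesis prevents this detour from collapsing back onto a single vertex, so the set of vertices reachable by shortest paths at least doubles as one moves an additional two steps out. Summing a geometric series then yields $n \ge 2^{\lfloor D \rfloor}$ hence $D \le \log_2 n$, and since $D$ is an integer, $D \le \lfloor \log_2 n \rfloor$.

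\textbf{Extremal case.} Suppose $n = 2^k$ and $D = k$. The plan is to show the BFS layering from $x$ must be exactly the layering of the $k$-cube: equality in the geometric-series bound forces $|L_i| = \binom{k}{i}$ and forces the bipartite ``adjacency'' between consecutive layers to be precisely the covering relation of the Boolean lattice. One argues that each $v \in L_i$ has exactly $i$ neighbours in $L_{i-1}$ and $k-i$ in $L_{i+1}$ (no slack is available), and that the redundancy/no-surrounded constraints rigidify the incidences so that the vertices of $L_i$ can be labelled by $i$-subsets of $[k]$ with adjacency given by set inclusion. The cube edges are then forced; any additional edges would have to lie \emph{within} a layer $L_i$, and one checks that adding such a chord either creates a surrounded vertex or violates the distance-$k$ requirement between $x$ and $y$ — which is why the statement says the maximum is realised ``only by the $k$-cube and graphs of which the $k$-cube is a subgraph'' (the phrasing presumably means: a $k$-cube possibly with extra intra-layer edges that happen to preserve all the hypotheses, so the $k$-cube's edge set is a subset). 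I would close by noting the $k$-cube itself attains the bound, since it is vertex-transitive, has all vertices redundant and none surrounded (already asserted in the text), and has diameter $k = \log_2 2^k$.

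\textbf{Main obstacle.} The delicate point is the rigidity argument in the extremal case: turning ``equality in a counting bound'' into ``the graph is literally the cube layering'' requires ruling out exotic bipartite incidence structures between consecutive layers that have the right degrees and satisfy the local redundancy constraints but are not the Boolean lattice covering relation. This is essentially a characterisation-by-extremality problem of the kind that can hide genuine combinatorial work, and I expect that is where most of the effort — and possibly an additional hypothesis or a more careful statement about ``graphs containing the $k$-cube'' — will be needed. A secondary subtlety is making the doubling step in the upper-bound argument uniform: one must be careful that the two detour-vertices guaranteed by non-surroundedness at different vertices of a layer are genuinely distinct across the layer (or account for overlaps), so that the layer sizes really multiply rather than merely grow.
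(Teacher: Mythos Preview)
The statement you are trying to prove is labelled a \emph{Conjecture} in the paper, and the paper offers no proof or proof sketch for it; it is presented as an open problem, supported only by the observation that the $k$-cube attains the bound and by the small-order data in the tables. So there is no ``paper's own proof'' to compare your attempt against.

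As for the viability of your plan: the doubling step is the real gap, and it is more serious than you indicate. Redundancy of a vertex $v\in L_i$ on a shortest $x$--$y$ path gives you \emph{one} alternate vertex $w$ providing a $2$-walk between the neighbours of $v$ in $L_{i-1}$ and $L_{i+1}$; but $w$ may lie in any of $L_{i-1}$, $L_i$, or $L_{i+1}$, and nothing in your argument forces these alternates, taken over all $v$ in a layer, to be pairwise distinct or to lie in the same layer. At that level of control you recover only $n\ge 2D$, which is exactly Lemma~\ref{str_simple}(iv) and uses redundancy alone. The ``no surrounded vertex'' hypothesis must be doing essentially all of the work in passing from linear to exponential growth, yet in your sketch it is invoked only to say the detour ``does not collapse onto a single vertex'', which is far too weak: non-surroundedness of $v$ says some neighbour of $v$ is \emph{not} a neighbour of $w$, but does not by itself manufacture a second independent branch at every level. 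You would need a structural statement more like ``every vertex in $L_i$ has at least $\min(i,D-i)$ neighbours in $L_{i-1}$'' (as holds in the cube), and deriving that from the hypotheses is the heart of the problem, not a detail.

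Your extremal-case plan inherits the same weakness: the layer sizes $|L_i|=\binom{k}{i}$ are an output of the (unestablished) doubling argument, not an independent fact, so the rigidity analysis has nothing firm to stand on. In short, what you have written is a reasonable heuristic for why the conjecture is plausible, but it is not yet a proof strategy with an identifiable missing lemma; the mechanism that converts ``no surrounded vertex'' into exponential layer growth remains to be discovered.
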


From a practical point of view, strong graphs with no surrounded
vertices realise a judicious compromise between redundancy and
efficiency. In applications such as communications networks, 
a little redundancy may be beneficial so that communications do not
break down entirely when one node is occupied or not available, but 
if the edges are themselves costly it may be 
inefficient to allow one node to surround another. The $k$-cube
is already a recognised solution that has been adopted in 
networks of processors for parallel computing. 

\subsection{Weak graphs (a.k.a. distance-critical graphs)}

The graphs we are calling `weakly linked' are the very ones previously
termed {\em distance-critical} by \Erdos\ and Howorka.\cite{Erdos1980}
The term {\em distance-critical} is, I think, more informative for
general usage in the literature, but I shall employ `weak' here
for the convenience of brevity.

The weak graphs begin at order $n=5$ where there is one weak graph.
Those of order in the range 5--8 are shown in Figs \ref{f.weak7},
\ref{f.weak8}.

\begin{lemma} \label{l.cycles}
A graph is weak iff
every vertex is part of a chordless cycle of length greater
than~4.
\end{lemma}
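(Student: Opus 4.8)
The plan is to characterize redundancy of a vertex $v$ in terms of the structure of the neighbourhood $N(v)$ together with the strong-link relation, using the ``clique'' picture developed earlier in Section \ref{s.dr}: recall that $v$ is redundant iff the subgraph induced by $N(v)$, augmented by edges between any pair of vertices strongly linked in $G$, is complete; equivalently, $v$ is \emph{not} redundant iff there exist two vertices $a,b \in N(v)$ which are weakly linked in $G$ (not adjacent, and with $v$ their unique common neighbour). So the statement to prove unpacks to: $G$ is weak (no vertex redundant) iff every vertex lies on a chordless cycle of length $\ge 5$.

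\textbf{Forward direction.} Suppose $G$ is weak and let $v$ be any vertex. Since $v$ is not redundant, by the observation above there are $a,b \in N(v)$ that are weakly linked: $ab \notin E$, and $v$ is the only common neighbour of $a$ and $b$. Now since $a$ is itself not redundant, there is a weakly linked pair inside $N(a)$; but I want to grow a cycle rather than just find local structure, so instead I would argue as follows. Because $G$ is weak, no vertex is a cut-vertex (a cut-vertex is not redundant by Lemma in Section \ref{s.dr}), so $G$ is $2$-connected on each component and in particular $a$ and $b$ lie on a common cycle. Take a shortest cycle $C$ through both $a$ and $b$ other than (or not using) the edges $va,vb$ — more carefully, consider a shortest cycle containing the path $a$-$v$-$b$. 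Such a cycle has length $\ge 4$; I claim it cannot have length $4$, because a $4$-cycle $a$-$v$-$b$-$x$-$a$ would make $x$ a common neighbour of $a$ and $b$ distinct from $v$, contradicting that $v$ is their unique common neighbour. Hence the cycle has length $\ge 5$. Finally, a \emph{shortest} cycle through a given edge (or through a given path of length $2$) is automatically chordless: any chord would produce a strictly shorter cycle still using the path $a$-$v$-$b$, because the chord splits $C$ into two arcs one of which contains $a$-$v$-$b$ and is shorter than $C$. That gives a chordless cycle of length $\ge 5$ through $v$.

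\textbf{Reverse direction.} Suppose every vertex lies on a chordless cycle of length $\ge 5$, and let $v$ be arbitrary; let $C = v_0 v_1 v_2 \cdots v_{k-1} v_0$ be a chordless cycle of length $k \ge 5$ with $v = v_0$. Then $v_1, v_{k-1} \in N(v)$, they are non-adjacent (chordlessness, since $k \ge 4$), and I claim $v$ is their only common neighbour: any common neighbour $u$ of $v_1$ and $v_{k-1}$ with $u \ne v$ is either on $C$ — impossible, since on a chordless cycle of length $\ge 5$ the only common neighbour of $v_1$ and $v_{k-1}$ is $v_0$ — or off $C$; but wait, an off-cycle common neighbour is not forbidden by chordlessness of $C$ alone. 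So here the hypothesis as literally stated needs the cycle chosen a bit more carefully, or the statement should be read as: there exists such a cycle for which $v_1,v_{k-1}$ have no other common neighbour. The clean fix is to take, among all chordless cycles of length $\ge 5$ through $v$, one of minimum length; then an off-cycle common neighbour $u$ of $v_1$ and $v_{k-1}$ would give the cycle $v_1$-$u$-$v_{k-1}$-$v_0$-$v_1$ of length $4$, hence $u,v_0$ would be two common neighbours and $v_1 v_{k-1} v_0 u$ is a $4$-cycle; more usefully, replace $v_0$ on $C$ by $u$ to get another cycle through... actually the cleanest route: if $v_1$ and $v_{k-1}$ had a second common neighbour $u$, then $v_1$-$u$-$v_{k-1}$ together with the arc $v_{k-1} v_{k-2} \cdots v_1$ is a cycle not through $v$, and by $2$-connectedness considerations one still gets $v$ redundant fails... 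I will instead simply argue directly from the weak-link criterion: the pair $\{v_1, v_{k-1}\}$ is weakly linked exactly when $v$ is their unique common neighbour, and I will show the minimal chordless $\ge 5$-cycle through $v$ forces this, the point being that any extra common neighbour $u$ yields a chordless cycle through $v$ of length $<k$ (route through $u$ on one side and around the long arc on the other, then take a chordless sub-cycle through $v$), contradicting minimality. Either way, $v_1$ and $v_{k-1}$ are a weakly linked pair in $N(v)$, so $v$ is not redundant; since $v$ was arbitrary, $G$ is weak.

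\textbf{Main obstacle.} The delicate point — and the one I expect to have to argue with care — is the reverse direction: ensuring that the chordless $\ge 5$-cycle through $v$ really certifies non-redundancy, i.e. that the two $C$-neighbours of $v$ have no common neighbour besides $v$. Chordlessness of $C$ rules out common neighbours \emph{on} $C$ but not off it, so one must either pick a minimum-length such cycle and derive a contradiction from an off-cycle common neighbour, or strengthen the cycle condition. The forward direction's only real subtlety is the standard but worth-stating fact that a shortest cycle through a prescribed $2$-path is chordless, together with the length-$4$ exclusion coming from uniqueness of the common neighbour $v$.
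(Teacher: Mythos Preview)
Your forward argument contains a logical reversal: from ``a cut-vertex is not redundant'' you conclude that a weak graph has no cut-vertices, but weak means \emph{no vertex is redundant}, so cut-vertices are entirely compatible with weakness. This matters, because the forward implication as stated is in fact false: take two disjoint copies of $C_5$ together with a new vertex $v$ joined by one edge to a vertex of each copy. Every vertex is non-redundant (each has a weakly linked pair of neighbours), so the graph is weak; yet $v$ lies on no cycle at all. The paper's own first sentence (``there are no endpoints \ldots\ so every vertex is in one or more cycles'') commits the same oversight.

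Your doubts about the reverse direction are well placed, and your minimality patch cannot close the gap. Take $C_5$ on vertices $1,2,3,4,5$ and add a vertex $6$ adjacent to $2$ and $5$. Every vertex lies on a chordless $5$-cycle (for $6$ use $6$-$2$-$3$-$4$-$5$-$6$), but vertex $1$ is redundant: its only neighbours $2,5$ share the second common neighbour $6$. The sole chordless cycle of length $\ge 5$ through $1$ is the original $C_5$, so no minimum-length choice helps; the off-cycle common neighbour is simply invisible to the chordlessness hypothesis. In short, the lemma as stated fails in both directions; what the paper's sketch actually supports is only the corollary (girth $\ge 5$ with minimum degree $\ge 2$ implies weak) together with the local characterisation ``$v$ is non-redundant iff some pair in $N(v)$ is weakly linked''.
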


\begin{proof}
There are no endpoints in a weak graph so every vertex is in one or more
cycles. For any given vertex $v$, a triangle or a square of which it is
a part both imply a strong link between the relevant pair of
neighbours of $v$. But in order for $v$ to be non-redundant there must
be a pair of neighbours which are not strongly linked. Therefore $v$ is
part of a chordless cycle longer than 4. The reverse implication also
follows. 
\qed
\end{proof}

\begin{corollary}
The graphs with no endpoints, no isolated vertices and no
cycles of length less than 5 are weak. 
For example, 
 bi-connected graphs of girth above 4 are weak.
\end{corollary}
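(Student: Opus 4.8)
The plan is to verify directly that a graph $G$ meeting the hypotheses has no redundant vertex, which is exactly the defining property of a weak graph. I would use the neighbourhood characterisation of redundancy noted above: $v$ is redundant iff no pair of vertices of $N(v)$ is weakly linked. (Alternatively one can go through Lemma~\ref{l.cycles}: for each vertex take a shortest cycle through it---its length is at least the girth, hence $\ge 5$---and observe it has no chord, since a chord splits it into two shorter cycles, one of which still contains the vertex; this exhibits the required chordless cycle of length $>4$. The direct route below is shorter.)

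Fix a vertex $u$. Since $G$ has no isolated vertices and no endpoints, $u$ has two distinct neighbours $p$ and $q$, and I claim $\{p,q\}$ is weakly linked. They are non-adjacent, for otherwise $u,p,q$ would be a triangle, a cycle of length $3<5$. And $u$ is their only common neighbour, for otherwise a common neighbour $w\neq u$ would close the $4$-cycle $u,p,w,q$; hence $p\,u\,q$ is the unique $2$-walk joining $p$ and $q$. So $N(u)$ contains a weakly linked pair, and $u$ is not redundant. Since $u$ was arbitrary, $G$ is weak.

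The ``for example'' clause then follows at once: a bi-connected graph on at least three vertices has minimum degree $\ge 2$, hence no isolated vertices and no endpoints, so once its girth exceeds $4$ it is covered by the statement just proved. I do not expect a genuine obstacle here; the only points needing a line of care are that ``no endpoints, no isolated vertices'' really does supply two neighbours at every vertex, and that the single hypothesis ``no cycle of length less than $5$'' simultaneously kills the triangle and the $4$-cycle that could otherwise give a second $2$-walk between $p$ and $q$ (so the argument genuinely needs girth above $4$, not just $4$---witness $C_4$, which is not weak).
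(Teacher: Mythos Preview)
Your direct argument is correct and is precisely the mechanism underlying the paper's Lemma~\ref{l.cycles}: a vertex $u$ fails to be redundant exactly when two of its neighbours form a weakly linked pair, and the girth~$\ge 5$ hypothesis supplies such a pair at once (no triangle gives non-adjacency, no $4$-cycle gives uniqueness of the $2$-walk). The paper simply records the result as a corollary of that lemma rather than arguing directly, so the underlying content is the same.

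One remark on your parenthetical alternative via Lemma~\ref{l.cycles}: the phrase ``take a shortest cycle through it'' presupposes that every vertex lies on \emph{some} cycle, which the bare hypotheses (minimum degree $\ge 2$, girth $\ge 5$) do not force---two $5$-cycles joined by a path of length~$2$ has a middle vertex lying on no cycle at all. Your direct weak-link argument handles such cut-vertices for free (removing $u$ then disconnects $p$ from $q$, so the distance certainly changes), whereas the lemma route needs an extra word here. That is a further reason to prefer the direct argument you chose as your main line.
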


\begin{figure*}
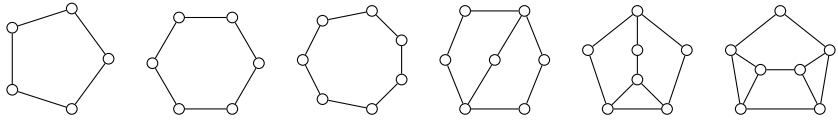

\centering{
\begin{tabular}{cccccc}
\tikz[scale=0.2]{\vertices{{( 3.5, 0)/a},{( 1.08155948, 3.32869781)/b},
{( -2.83155948, 2.05724838)/c},{( -2.83155948, -2.05724838)/d},        
{( 1.08155948, -3.32869781)/e}}                                        
\edges{a/b,a/e,b/c,c/d,d/e}                                            
}                                                                      
 & 
\tikz[scale=0.2]{\vertices{{( 3.5, 0)/a},{( 1.75, 3.03108891)/b},             
{( -1.75, 3.03108891)/c},{( -3.5, 4.2862638e-16)/d},{( -1.75, -3.03108891)/e},
{( 1.75, -3.03108891)/f}}                                                     
\edges{a/b,a/f,b/c,c/d,d/e,e/f}                                               
}                                                                             
 & 
\tikz[scale=0.2]{\vertices{{( 7.15, 2.6)/a},{( 1.95, 1.3)/b},       
{( 1.95, 6.5)/c},{( 5.2, 7.15)/d},{( 0.65, 3.9)/e},{( 5.2, 0.65)/f},
{( 7.15, 5.2)/g}}                                                   
\edges{a/f,a/g,b/e,b/f,c/d,c/e,d/g}                                 
}                                                                   
 & 
\tikz[scale=0.2]{\vertices{{( 5.85, 0.65)/a},{( 1.95, 0.65)/b},     
{( 1.95, 7.15)/c},{( 0.65, 3.9)/d},{( 3.9, 3.9)/e},{( 7.15, 3.9)/f},
{( 5.85, 7.15)/g}}                                                  
\edges{a/b,a/f,b/d,b/e,c/d,c/g,e/g,f/g}                             
}                                                                   
 & 
\tikz[scale=0.2]{\vertices{{( 5.85, 0.65)/a},{( 3.9, 2.6)/b},         
{( 3.9, 7.15)/c},{( 0.65, 4.55)/d},{( 7.15, 4.55)/e},{( 3.9, 4.55)/f},
{( 1.95, 0.65)/g}}                                                    
\edges{a/b,a/e,a/g,b/f,b/g,c/d,c/e,c/f,d/g}                           
}                                                                     
 & 
\tikz[scale=0.2]{\vertices{{( 1.3, 0.65)/a},{( 7.15, 4.55)/b},       
{( 0.65, 4.55)/c},{( 5.2, 3.25)/d},{( 3.9, 7.15)/e},{( 6.5, 0.65)/f},
{( 2.6, 3.25)/g}}                                                    
\edges{a/c,a/f,a/g,b/d,b/e,b/f,c/e,c/g,d/f,d/g}                      
}                                                                    
\end{tabular}

}
\caption{The weak graphs on up to 7 vertices.}
\label{f.weak7}
\end{figure*}

\begin{figure*}
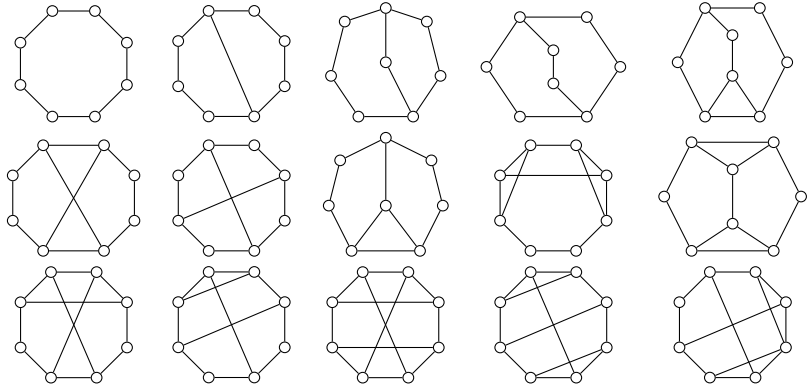

\centering{
\begin{tabular}{ccccc}
\tikz[scale=0.2]{\vertices{{( 7.7, 5.6)/a},{( 7.7, 2.8)/b},{( 2.8, 7.7)/c},
{( 5.6, 7.7)/d},{( 0.7, 5.6)/e},{( 5.6, 0.7)/f},{( 0.7, 2.8)/g},           
{( 2.8, 0.7)/h}}                                                           
\edges{a/b,a/d,b/f,c/d,c/e,e/g,f/h,g/h}                                    
}                                                                          
 & 
\tikz[scale=0.2]{\vertices{{( 6, 9)/a},{( 3, 2)/b},{( 1, 7)/c},
{( 3, 9)/d},{( 8, 4)/e},{( 6, 2)/f},{( 1, 4)/g},{( 8, 7)/h}}   
\edges{a/d,a/h,b/f,b/g,c/d,c/g,d/f,e/f,e/h}                    
}                                                              
 & 
\tikz[scale=0.2]{\vertices{{( 7.2, 1.8)/a},{( 9, 4.5)/b},{( 1.8, 4.5)/c},
{( 3.6, 1.8)/d},{( 2.7, 8.1)/e},{( 8.1, 8.1)/f},{( 5.4, 9)/g},           
{( 5.4, 5.4)/h}}                                                         
\edges{a/b,a/d,a/h,b/f,c/d,c/e,e/g,f/g,g/h}                              
}                                                                        
 & 
\tikz[scale=0.2]{\vertices{{( 8.8, 3.3)/a},{( 4.4, 3.3)/b},{( 6.6, 7.7)/c},
{( 8.8, 9.9)/d},{( 6.6, 5.5)/e},{( 11, 6.6)/f},{( 2.2, 6.6)/g},            
{( 4.4, 9.9)/h}}                                                           
\edges{a/b,a/e,a/f,b/g,c/e,c/h,d/f,d/h,g/h}                                
}                                                                          
 & 
\tikz[scale=0.2]{\vertices{{( 7.2, 9)/a},{( 3.6, 1.8)/b},{( 9, 5.4)/c},
{( 1.8, 5.4)/d},{( 5.4, 7.2)/e},{( 7.2, 1.8)/f},{( 3.6, 9)/g},         
{( 5.4, 4.5)/h}}                                                       
\edges{a/c,a/g,b/d,b/f,b/h,c/f,d/g,e/g,e/h,f/h}                        
}                                                                      
\\ 
\tikz[scale=0.2]{\vertices{{( 9, 7)/a},{( 7, 2)/b},{( 1, 7)/c},
{( 3, 9)/d},{( 9, 4)/e},{( 3, 2)/f},{( 1, 4)/g},{( 7, 9)/h}}   
\edges{a/e,a/h,b/d,b/e,b/f,c/d,c/g,d/h,f/g,f/h}                
}                                                              
 & 
\tikz[scale=0.2]{\vertices{{( 4, 2)/a},{( 9, 7)/b},{( 2, 7)/c},
{( 4, 9)/d},{( 7, 2)/e},{( 7, 9)/f},{( 2, 4)/g},{( 9, 4)/h}}   
\edges{a/e,a/g,b/f,b/g,b/h,c/d,c/g,d/e,d/f,e/h}                
}                                                              
 & 
\tikz[scale=0.2]{\vertices{{( 7.5, 3.75)/a},{( 6, 0.75)/b},{( 0.75, 6.75)/c},
{( 3.75, 8.25)/d},{( 1.5, 0.75)/e},{( 0, 3.75)/f},{( 3.75, 3.75)/g},         
{( 6.75, 6.75)/h}}                                                           
\edges{a/b,a/h,b/e,b/g,c/d,c/f,d/g,d/h,e/f,e/g}                              
}                                                                            
 & 
\tikz[scale=0.2]{\vertices{{( 1, 4)/a},{( 8, 4)/b},{( 3, 9)/c},
{( 6, 9)/d},{( 1, 7)/e},{( 3, 2)/f},{( 8, 7)/g},{( 6, 2)/h}}   
\edges{a/c,a/e,a/f,b/d,b/g,b/h,c/d,c/e,d/g,e/g,f/h}            
}                                                              
 & 
\tikz[scale=0.2]{\vertices{{( 9, 0.9)/a},{( 6.3, 6.3)/b},{( 6.3, 2.7)/c},
{( 3.6, 8.1)/d},{( 9, 8.1)/e},{( 1.8, 4.5)/f},{( 10.8, 4.5)/g},          
{( 3.6, 0.9)/h}}                                                         
\edges{a/c,a/g,a/h,b/c,b/d,b/e,c/h,d/e,d/f,e/g,f/h}                      
}                                                                        
\\ 
\tikz[scale=0.2]{\vertices{{( 4, 1)/a},{( 2, 3)/b},{( 4, 8)/c},
{( 7, 8)/d},{( 9, 3)/e},{( 7, 1)/f},{( 2, 6)/g},{( 9, 6)/h}}   
\edges{a/b,a/d,a/f,b/g,c/d,c/f,c/g,d/h,e/f,e/h,g/h}            
}                                                              
 & 
\tikz[scale=0.2]{\vertices{{( 9, 8)/a},{( 4, 10)/b},{( 2, 8)/c},
{( 7, 3)/d},{( 2, 5)/e},{( 9, 5)/f},{( 4, 3)/g},{( 7, 10)/h}}   
                                                                
\edges{a/e,a/f,a/h,b/c,b/d,b/h,c/e,c/h,d/f,d/g,e/g}             
}                                                               
 & 
\tikz[scale=0.2]{\vertices{{( 9, 1)/a},{( 11, 6)/b},{( 4, 6)/c},
{( 9, 8)/d},{( 6, 8)/e},{( 4, 3)/f},{( 11, 3)/g},{( 6, 1)/h}}   
                                                                
\edges{a/e,a/g,a/h,b/c,b/d,b/g,c/e,c/f,d/e,d/h,f/g,f/h}         
}                                                               
 & 
\tikz[scale=0.2]{\vertices{{( 8, 1)/a},{( 3, 6)/b},{( 8, 8)/c},
{( 3, 3)/d},{( 10, 3)/e},{( 5, 1)/f},{( 5, 8)/g},{( 10, 6)/h}} 
                                                               
\edges{a/e,a/f,a/g,b/c,b/d,b/g,c/g,c/h,d/f,d/h,e/f,e/h}        
}                                                              
 & 
\tikz[scale=0.2]{\vertices{{( 9, 4)/a},{( 2, 7)/b},{( 7, 2)/c},
{( 4, 2)/d},{( 9, 7)/e},{( 7, 9)/f},{( 2, 4)/g},{( 4, 9)/h}}   
\edges{a/c,a/d,a/e,a/f,b/g,b/h,c/d,c/h,d/g,e/f,e/g,f/h}        
}                                                              
\\ 
\end{tabular}

}
\caption{The weak graphs on 8 vertices. The first 5 are examples of
the construction indicated in lemma \ref{l.weaksub}. The 13th is a cube with two
edges moved. }
\label{f.weak8}
\end{figure*}

\begin{lemma} \label{l.weaksub}
Every graph of order $n$ with no isolated vertices and $k$
redundant vertices
is an induced subgraph of some weak graph of order $\le n+k+1$.
\end{lemma}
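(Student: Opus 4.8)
The plan rests on the local view of redundancy used above: a vertex is \emph{non}-redundant exactly when its open neighbourhood contains a weakly linked pair (the per-vertex form of the criterion behind Lemma~\ref{l.strong}; equivalently, by Lemma~\ref{l.cycles}, exactly when it lies on a chordless cycle of length greater than $4$). So I want to adjoin few new vertices so that (a) every vertex of $G$ that was already non-redundant stays non-redundant, (b) each of the $k$ redundant vertices of $G$ gains a new neighbour weakly linked to one of its old neighbours, and (c) every new vertex is itself non-redundant. Write $s_1,\dots,s_k$ for the redundant vertices of $G$; if $k=0$ take $H=G$, which is already weak, so assume $k\ge 1$.

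\textbf{Construction.} Add $k+1$ new vertices $x_1,\dots,x_k,z$; join $x_i$ to $s_i$ for each $i$ and join $z$ to every $x_i$; if (and only if) $k=1$, additionally join $z$ to a vertex $u_0$ of $G$ with $u_0\notin N_G[s_1]$. No edge is added inside $V(G)$, so $G$ is an induced subgraph of the resulting graph $H$, and $|V(H)|=n+k+1$. The feature that makes everything work is that each new vertex has at most one old neighbour.

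\textbf{Verification.} (a) If $v\in V(G)$ is non-redundant it has a weakly linked pair $\{a,b\}\subseteq N_G(v)$; in $H$ the vertices $a,b$ are still non-adjacent, and $v$ is still their only common neighbour, because a new common neighbour would be a new vertex with two old neighbours, which cannot happen. (b) Since $G$ has no isolated vertex, pick any $a\in N_G(s_i)$; in $H$ the only neighbours of $x_i$ are $s_i$ and $z$, and neither $z$ nor any other new vertex is adjacent to $a$, so $\{x_i,a\}$ is weakly linked with $s_i$ as unique common neighbour, hence removing $s_i$ raises $d(x_i,a)$ and $s_i$ is non-redundant in $H$. (c) Each $x_i$ has degree $2$ with $N_H(x_i)=\{s_i,z\}$, and $x_i$ is the only vertex adjacent to both $s_i$ and $z$ (the only neighbour of $z$ meeting $N_G(s_i)$ is $x_i$ when $k\ge2$; when $k=1$ this uses $u_0\notin N_G[s_1]$), so $x_i$ is not surrounded and therefore not redundant; and $z$ is non-redundant because $N_H(z)$ contains a weakly linked pair whose unique common neighbour is $z$, namely $\{x_i,x_j\}$ for any $i\ne j$ when $k\ge 2$, and $\{x_1,u_0\}$ when $k=1$.

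\textbf{Where the difficulty lies.} The delicate part is (c) for small $k$: with only $k+1$ new vertices one cannot build a chordless cycle of length $>4$ from the new vertices alone, so the non-redundancy of the new vertices must be supplied externally, which is why $z$ (and, for $k=1$, the old vertex $u_0$) are needed. What makes the $k=1$ case go through is that a dominating vertex cannot be the sole redundant vertex: if $s$ is dominating then every non-adjacent pair $p,q$ of neighbours of any vertex $u$ has $s$ and $u$ as common neighbours, so every neighbourhood strong-link-completes to a clique and \emph{all} vertices are redundant, giving $k=n$. Hence when $k=1$ the vertex $s_1$ is not dominating and the vertex $u_0$ exists. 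I would still verify the very smallest orders by hand as a check.
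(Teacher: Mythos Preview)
Your construction is exactly the paper's ``spider'' construction: adjoin a star $K_{1,k}$ with centre $z$ and leaves $x_1,\dots,x_k$, and join each $x_i$ to the $i$-th redundant vertex $s_i$. You in fact go a step further than the paper by noticing that for $k=1$ the bare spider leaves $z$ as a redundant pendant, and you patch this with the extra edge $zu_0$ together with a clean argument that such a $u_0\notin N_G[s_1]$ must exist; the paper's proof does not address this edge case.
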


\begin{proof}
We present a construction to make a weak graph starting from any graph
$G$ having no isolated vertices.
If in the given graph $G$ there are $k$ redundant vertices then 
attach to it a $k$-legged `spider' as follows. First
form the disjoint union of $G$ with a star graph on
$k+1$ vertices. Then attach each redundant vertex in the graph to
a pendant vertex in the star. The resulting graph is
weak because it introduces a weak link around every vertex which
had been redundant, and the introduced vertices also participate
in weak links. \qed
\end{proof}

\begin{figure}
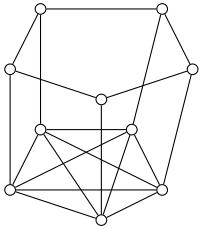

\centering{
\begin{tabular}{c}
\tikz[scale=0.2]{\vertices{{( 16, 8)/a},{( 12, 14)/b},{( 16, 20)/c},
{( 8, 12)/d},{( 6, 16)/e},{( 12, 6)/f},{( 18, 16)/g},{( 8, 20)/h},  
{( 14, 12)/i},{( 6, 8)/j}}                                          
\edges{a/d,a/f,a/g,a/i,a/j,b/e,b/f,b/g,c/g,c/h,c/i,d/f,d/h,d/i,     
d/j,e/h,e/j,f/i,f/j,i/j}                                            
}                                                                   
\end{tabular}

}
\caption{A weak graph which contains $K_5$ as a subgraph.
This is the only weak graph of order below 11
having this property.}
\label{f.K5}
\end{figure}

For many starting graphs there are constructions resulting in a
weak graph smaller than the one obtained by this `spider'
construction; 
there are examples in the figures. For example, if there are less
than three
redundant vertices it is always sufficient to add 
two further vertices to make the graph weak, and it may not be
necessary to add any new vertices (as, for example, when one converts
a path to a cycle). A further interesting example is shown in Fig.
\ref{f.K5} which shows the smallest weak graph containing the
complete graph $K_5$ as a subgraph.

The above constructions may reduce the graph diameter. A construction
which does not reduce the graph diameter is to introduce, for
every redundant vertex, a path graph P4 and attach its ends to
the redundant vertex, creating a chordless 5-vertex cycle. Alternatively,
introduce a pair of connected vertices (the graph P2) and attach it
to two non-adjacent neighbours of the non-redundant
vertex, thus creating a chordless 5-cycle.

\begin{theorem}
[\Erdos\ and Howorka]\cite{Erdos1980}  The number of edges in a weak graph of
order $n$ is bounded
from above by $|E| \le n(n-1)/2 - f(n)$ where
$f(n)$ is of order $\sqrt{2} n^{3/2}$.
\end{theorem}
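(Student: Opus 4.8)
The plan is to bound the number of edges in a weak graph by exhibiting a large set of non-edges that are forced by the weak-link condition. Recall from Lemma \ref{l.cycles} that in a weak graph every vertex $v$ has two neighbours $a,b$ that are weakly linked, meaning $a$ and $b$ are non-adjacent and $v$ is their unique common neighbour. Equivalently, for each $v$ there is a ``witness pair'' $\{a,b\}$ of non-adjacent vertices whose only $2$-walk passes through $v$. The strategy is to count such pairs: each non-edge $\{a,b\}$ can serve as a witness for at most one vertex $v$ (since $v$ must be the unique common neighbour of $a$ and $b$), so the number of distinct witness pairs is at least the number of vertices that receive a ``private'' witness. The subtlety is that a single non-edge $\{a,b\}$ with unique common neighbour $v$ witnesses exactly one vertex, but one vertex $v$ may be witnessed by several non-edges; so a naive count only gives $|E^c| \ge$ (number of vertices with at least one witness) $= n$, i.e. $|E| \le \binom{n}{2} - n$, which is far weaker than the claimed $f(n) \sim \sqrt{2}\,n^{3/2}$.

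To get the $n^{3/2}$ improvement one needs a more global argument. First I would reformulate: let $W \subseteq \binom{V}{2}$ be the set of non-adjacent pairs with exactly one common neighbour, and for each such pair let $\phi(\{a,b\})$ be that common neighbour; weakness says $\phi$ is surjective onto $V$. Now fix a vertex $v$ and consider its neighbourhood $N(v)$ with $d_v = |N(v)|$. The pairs in $W$ mapped to $v$ are non-edges inside $N(v)$; say there are $t_v \ge 1$ of them. The key structural point is that these $t_v$ non-edges inside $N(v)$ cannot be too few relative to how ``connected'' $N(v)$ must otherwise be, but more usefully they contribute to a deficiency count. Summing $\sum_v t_v = |W| \le \binom{n}{2} - |E|$. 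So it suffices to show $\sum_v t_v$ is large, i.e. $\Omega(n^{3/2})$; equivalently, that one cannot have every vertex witnessed by only $O(\sqrt{n})$ non-edges on average while keeping the graph dense. I would try a double-counting / convexity argument: a non-edge $\{a,b\}$ lies in $N(v)$ only if $v \in N(a)\cap N(b)$; if $|N(a)\cap N(b)| = 1$ it contributes to $t_v$, and the number of pairs $(a,b)$ with $a,b$ adjacent to a fixed $v$ is $\binom{d_v}{2}$, of which $\binom{d_v}{2} - e(N(v))$ are non-edges, where $e(N(v))$ counts edges inside $N(v)$. Among those non-edges of $N(v)$, those with a second common neighbour $w \neq v$ correspond to ``diamonds'' $v,w,a,b$; bounding the number of diamonds (via $\sum_v \binom{d_v}{2}$ relative to the number of paths of length two) should show that for dense graphs most non-edges inside $N(v)$ have unique common neighbour, forcing $t_v$ close to $\binom{d_v}{2} - e(N(v))$, which is large when $d_v$ is large.

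Concretely, I would set it up as an optimization: suppose $|E| = \binom{n}{2} - m$ with $m$ the number of non-edges. Each non-edge has some number $c(e) \ge 0$ of common neighbours; weakness forces that the non-edges with $c(e)=1$ cover all $n$ vertices under $\phi$, but I actually want a lower bound on $m$ itself. Count pairs (non-edge $e$, common neighbour $v$ of $e$): this equals $\sum_{e \notin E} c(e) = \sum_v \big(\binom{d_v}{2} - e(N(v))\big)$. On the other hand $\sum_v e(N(v)) = 3 t(G) \le$ (number of triangles) $\cdot 3$, and triangles are controlled by $|E|$ via Kruskal–Katona or a cruder bound. If the graph is very dense ($m$ small), then $\sum_v \binom{d_v}{2}$ is close to $n\binom{n-1}{2}$ while $\sum_e c(e) \le m \cdot (n-2)$, forcing $\sum_v e(N(v))$ to be enormous; but the number of triangles is at most $\binom{n}{3}$ minus contributions lost to non-edges, and a careful accounting of how each non-edge destroys triangles should yield $m = \Omega(n^{3/2})$. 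I expect the main obstacle to be making this triangle/diamond accounting tight enough to extract the exact constant $\sqrt{2}$ rather than merely the order $n^{3/2}$; getting the leading constant right will likely require identifying the extremal configuration (plausibly something like a blow-up of a projective-plane-type incidence structure, which is where $\sqrt{n}$ and the constant $\sqrt{2}$ naturally enter) and matching the bound to it. Since the theorem is attributed to \Erdos\ and Howorka and only the order of $f(n)$ is asserted, I would aim the proof at establishing $m \ge (\sqrt{2}+o(1))\,n^{3/2}$ via the convexity bound $\sum_v \binom{d_v}{2} \le m(n-2) + \sum_v e(N(v))$ combined with an upper bound $\sum_v e(N(v)) = O(n^{5/2})$ coming from the absence of too many diamonds, and leave the extremal-graph identification as the delicate endgame.
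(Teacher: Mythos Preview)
The paper does not prove this theorem. It is stated with attribution and a citation to the \Erdos--Howorka paper, and is followed immediately by commentary on its consequences and on their conjectured exact values of $f(n)$; no argument is supplied. So there is no in-paper proof for your sketch to be compared against.

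On your sketch itself, there is a concrete gap. The inequality you arrive at,
\[
\sum_v \binom{d_v}{2}\;\le\; m(n-2)\;+\;\sum_v e\big(N(v)\big),
\]
is correct but holds for \emph{every} graph; the weak-graph hypothesis never enters it. To extract $m=\Omega(n^{3/2})$ you then want $\sum_v e(N(v))=O(n^{5/2})$, and this is false precisely for the graphs in question: a weak graph with only $m\asymp n^{3/2}$ non-edges is nearly complete, hence has $\Theta(n^3)$ triangles, and $\sum_v e(N(v))=3\cdot(\text{number of triangles})=\Theta(n^3)$. Substituting back gives $\Theta(n^3)\le m(n-2)+\Theta(n^3)$, which says nothing about $m$. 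The weak property has to be exploited more sharply than ``each vertex owns at least one private non-edge in its neighbourhood'' (which alone yields only $m\ge n$). A genuinely useful consequence of a witness pair $\{a_v,b_v\}$ is that \emph{every} vertex other than $v$ is non-adjacent to $a_v$ or to $b_v$, i.e.\ $\bar d(a_v)+\bar d(b_v)\ge n-1$ in the complement; turning the $n$ distinct such constraints into $m=\Omega(n^{3/2})$ is the real extremal content, and it is not captured by the cherry/triangle double count you set up.

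One further remark: the paper records that \Erdos\ and Howorka prove
\[
\big(1/\sqrt{2}+o(1)\big)\,n^{3/2}\;\le\;f(n)\;\le\;\big(\sqrt{2}+o(1)\big)\,n^{3/2}.
\]
The inequality side (lower bound on $f(n)$, hence upper bound on $|E|$) has leading constant $1/\sqrt{2}$; the $\sqrt{2}$ side comes from a construction. Your stated target $m\ge(\sqrt{2}+o(1))\,n^{3/2}$ is therefore stronger than what is known.
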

The implication of this result is that a graph can have a lot of edges
while still remaining weak. Comparing $|E|$ with the number of vertex-pairs
in the graph, we have $|E| /\, ^n\!C_2 \simeq 1 - 2 \sqrt{2} n^{-1/2}$
for $n \gg 1$ when $|E|$ is at the upper bound, 
which indicates that most vertex-pairs can be adjacent for large $n$. 

\Erdos\ and Howorka express their limit in
the form $(1/\sqrt{2} + O(1))n^{3/2} \le f(n) \le (\sqrt{2} + O(1))n^{3/2}$
which amounts to the above. They add that they suspect
$f(k(k-5)/2 + k) = (k^3 - 6 k^2 + 7k)/2$. This formula gives $f(5) = 5$
which is correct, and $f(9) = 21$ which is incorrect because for
$n=9$ the maximum edge count is $18$, leading to $f(9) = 18$. This does not
deny that the formula may be accurate at many other values of $n$.

\subsubsection{Weak and $(k>1)$-order redundant}

The smallest graph with no $k$-th order redundant vertex is
of order $4k+1$ and is the cycle graph. The smallest graph with no
$k$'th-order redundant vertex and which is not a cycle graph is of
order $6k+1$. This can be seen by merging two cycle graphs.

For even $n$, a weak graph of order $n$ and diameter $n/2$ with all 
vertices 2nd-order redundant can be constructed as shown in Fig. \ref{f.link2}.

\begin{figure}
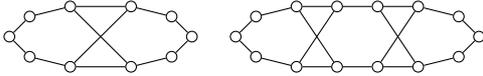

\centering{
\begin{tabular}{cc}
\tikz[scale=0.2]{\vertices{{( 13.3333333, 2.66666667)/a},{( 12, 4)/b},                     
{( 9.33333333, 4.66666667)/c},{( 5.33333333, 4.66666667)/d},{( 2.66666667, 4)/e},          
{( 1.33333333, 2.66666667)/f},{( 2.66666667, 1.33333333)/g},{( 5.33333333, 0.666666667)/h},
{( 9.33333333, 0.666666667)/i},{( 12, 1.33333333)/j}}                                      
\edges{a/b,a/j,b/c,c/d,c/h,d/e,d/i,e/f,f/g,g/h,h/i,i/j}                                    
}                                                                                          
 & 
\tikz[scale=0.2]{\vertices{{( 16, 4)/a},{( 14.6666667, 5.33333333)/b},  
{( 12, 6)/c},{( 9.33333333, 6)/d},{( 6.66666667, 6)/e},{( 4, 6)/f},     
{( 1.33333333, 5.33333333)/g},{( 0, 4)/h},{( 1.33333333, 2.66666667)/i},
{( 4, 2)/j},{( 6.66666667, 2)/k},{( 9.33333333, 2)/l},{( 12, 2)/m},     
{( 14.6666667, 2.66666667)/n}}                                          
\edges{a/b,a/n,b/c,c/d,c/l,d/e,d/m,e/f,e/j,f/g,f/k,g/h,h/i,i/j,         
j/k,k/l,l/m,m/n}                                                        
}                                                                       
\end{tabular}

}
\caption{The first two in a sequence of weak graphs with all vertices
2nd-order redundant.}
\label{f.link2}
\end{figure}

\section{Tensor products}

\begin{theorem}
	\begin{enumerate}
		\item The tensor product of two weak graphs is weak.
		\item The tensor product of two strong graphs without endpoints
		is strong.
	\end{enumerate}
\end{theorem}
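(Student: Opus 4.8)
The plan is to reduce everything to two local facts established earlier: a vertex is non-redundant exactly when two of its neighbours are weakly linked, and (Lemma \ref{l.strong}) a graph is strong exactly when it contains no weakly linked pair at all. The only computation needed is that $2$-walks multiply across a tensor product. In $T=A\otimes B$, a $2$-walk $(a,b)\to(x,y)\to(a',b')$ exists iff $x$ is a common neighbour of $a,a'$ in $A$ and $y$ is a common neighbour of $b,b'$ in $B$, so the number of $2$-walks between $(a,b)$ and $(a',b')$ in $T$ equals $w_2^A(a,a')\,w_2^B(b,b')$, where $w_2^X(x,y)$ denotes the number of $2$-walks between $x$ and $y$ in $X$ (that is, $|N_X(x)\cap N_X(y)|$ when $x\ne y$, and $d_x$ when $x=y$). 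I would record this, together with the adjacency rule $(a,b)\sim(a',b')\iff a\sim a'\text{ and }b\sim b'$, as preliminary observations.

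For part (i), I would fix an arbitrary vertex $(a,b)$ of $T$ and produce a weakly linked pair among its neighbours. Since $A$ is weak, $a$ is non-redundant, so some two neighbours $u_1,u_2$ of $a$ are weakly linked; being neighbours of $a$, their unique common neighbour is $a$ itself. Likewise $B$ weak yields weakly linked neighbours $v_1,v_2$ of $b$ whose unique common neighbour is $b$. Then $(u_1,v_1)$ and $(u_2,v_2)$ are both neighbours of $(a,b)$, are non-adjacent in $T$ because $u_1\not\sim u_2$, and by multiplicativity have exactly $w_2^A(u_1,u_2)\,w_2^B(v_1,v_2)=1\cdot 1=1$ common neighbour, necessarily $(a,b)$. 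So these two vertices are weakly linked, whence $(a,b)$ is non-redundant; as $(a,b)$ was arbitrary, $T$ is weak.

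For part (ii), by Lemma \ref{l.strong} it suffices to show $T$ has no weakly linked pair, so I would suppose for contradiction that $(a,b)$ and $(a',b')$ form one: distinct, non-adjacent, with a unique common neighbour. Multiplicativity forces $w_2^A(a,a')=w_2^B(b,b')=1$. Split on whether the coordinates coincide. If $a=a'$ then $w_2^A(a,a')=d_a=1$, impossible since $A$ has no endpoint (degrees $0$ or $\ge 2$); by symmetry $b=b'$ is also impossible, and $a=a',\ b=b'$ contradicts distinctness. So $a\ne a'$ and $b\ne b'$. If $a\not\sim_A a'$, then $a,a'$ are non-adjacent with a single common neighbour, hence a weakly linked pair in $A$, contradicting that $A$ is strong. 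Otherwise $a\sim_A a'$, and then non-adjacency of $(a,b),(a',b')$ forces $b\not\sim_B b'$, making $b,b'$ a weakly linked pair in $B$, contradicting that $B$ is strong. In every case we reach a contradiction, so $T$ is strong.

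The argument has no serious obstacle; the only place demanding care is the case analysis in part (ii). When the two vertices of the hypothetical weakly linked pair agree in one coordinate, the common-neighbour count in that factor degenerates into a degree, and this is precisely where the hypothesis ``without endpoints'' earns its keep: an isolated factor-vertex contributes a count of $0$ (harmless) but a pendant would contribute $1$ (fatal). When the coordinates differ, the remaining subtlety is simply to decide, from the failure of adjacency in $T$ together with the adjacency rule, which of $A,B$ inherits the non-adjacency that manufactures the forbidden weak link. Everything else is bookkeeping around the $2$-walk product formula.
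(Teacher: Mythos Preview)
Your proof is correct, and for part~(i) it takes a genuinely different route from the paper. The paper argues via Lemma~\ref{l.cycles}: for each vertex $(u,v)$ of $T$ it threads together the chordless cycle through $u$ in $A$ and the one through $v$ in $B$ into a single cycle of length $\operatorname{lcm}(l(u),l(v))$ in $T$, and asserts this cycle is chordless. You instead work directly with the multiplicativity of common-neighbour counts to manufacture a weakly linked pair among the neighbours of $(a,b)$. Your route is shorter and more robust: the paper's chordlessness assertion is in fact delicate (for instance, with $l(u)=5$ and $l(v)=6$ the resulting $30$-cycle in $C_5\otimes C_6$ acquires chords at distance $11$, since $11\equiv 1\pmod 5$ and $11\equiv -1\pmod 6$), so one would still need to argue that some chordless sub-cycle through $(u,v)$ has length exceeding~$4$. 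Your argument sidesteps this entirely.

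For part~(ii) the two proofs are close relatives. The paper fixes a vertex $(u,v)$ and shows that every pair of its neighbours is strongly linked, splitting into four cases on the factor adjacencies; you invoke Lemma~\ref{l.strong} globally and suppose a weakly linked pair exists, then derive a contradiction by the same multiplicativity. The content is essentially the same, but your organisation makes the role of the ``no endpoints'' hypothesis more visible: it is precisely what rules out the degenerate case where the two offending vertices share a coordinate and the common-neighbour count collapses to a degree. In the paper's version this case is absorbed into reading ``path $u_1$--$u$--$u_2$'' as allowing $u_1=u_2$.
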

\begin{proof}
	(i)
	Let the graphs be $A$ and $B$ and their tensor product be $T$. 
	To each vertex in $T$ there corresponds a pair of vertices $u \in A$, $v \in B$.
	By lemma \ref{l.cycles}, $u$ is a member of a chordless cycle in $A$ 
	of length $l(u) > 4$, and similarly for $v$ in $B$. Label the
	members of these cycles $A_1,A_2, \ldots, A_{l(u)}$ and
	$B_1,B_2, \ldots, B_{l(v)}$, respectively. In the tensor product graph there
	will be a cycle $(A_1,B_1),(A_2,B_2),\ldots$ where either vertex-list
	loops around when its cycle closes. The resulting cycle in $T$
	has a length equal to the least common multiple of $l(u)$ and $l(v)$
	and it is chordless. Hence the graph is weak by lemma \ref{l.cycles}.
	\qed \\
	(ii) 
	Let $u_1$-$u$-$u_2$ be a path in $A$ and let $v_1$-$v$-$v_2$ be a path in $B$.
	We consider the vertices $(u,v)$, $(u_1,v_1)$, $(u_2,v_2)$ in $T$.
	By construction the first of these is adjacent to the other two, and
	furthermore all pairs of neighbours of $(u,v)$ can be obtained this way.
	In order that $T$ be strong, it suffices if, for all such pairs of
	neighbours, either $(u_1,v_1)$ is
	adjacent to $(u_2,v_2)$, or both are adjacent to another vertex not
	equal to $(u,v)$. There are four possibilities:\\
	(1) $u_1 \in N(u_2)$ and $v_1 \in N(v_2)$;\\
	(2) $u_1 \notin N(u_2)$ and $v_1 \in N(v_2)$;\\
	(3) $u_1 \in N(u_2)$ and $v_1 \notin N(v_2)$;\\
	(4) $u_1 \notin N(u_2)$ and $v_1 \notin N(v_2)$.\\
	In case (1) $(u_1,v_1)$ is adjacent to $(u_2,v_2)$ in $T$. In cases (2)
	and (4) use the fact that $A$ is strong and therefore there is a vertex
	$u_3 \ne u$ which is adjacent to both $u_1$ and $u_2$. The vertex $(u_3,v)$
	is then adjacent to both $(u_1,v_1)$ and $(u_2,v_2)$ in $T$, therefore
	there is a strong link between the latter (using that $(u_3,v) \ne (u,v)$). 
	Finally case (3) is settled by the same construction employing a
	further vertex $v_3$ in $B$. \qed
\end{proof}

By combining this theorem with theorem \ref{th.isosurround} we deduce that
tensor products of any of the graphs in table \ref{t.rs} yield further
strong graphs with no surrounded vertices.

\section{Random graphs} \label{s.gnp}

We discuss random graphs in the $G(n,p)$ model proposed by
Gilbert and often named after \Erdos\ and R\'{e}nyi, in 
which each possible edge in a graph of order $n$
is assigned independently of the others with some fixed probability~$p$.\cite{Gilbert1959}

\begin{figure}
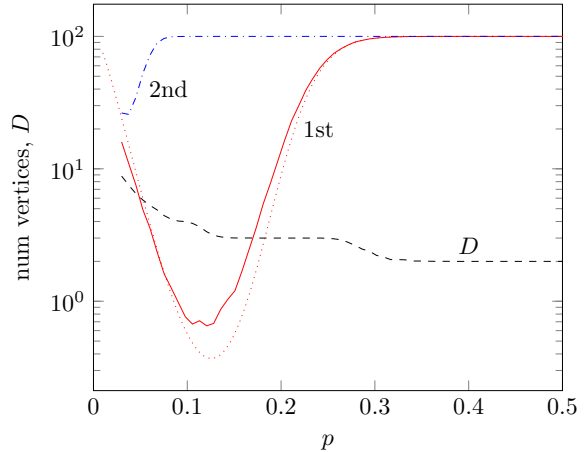

\mytikz{0.9}{Gnp100}
\caption{Properties of $G(n,p)$ graphs as a function of $p$ for $n=100$,
obtained by numerically generating a large number of graphs. Only
connected graphs are included.
Full curve: average number of redundant vertices; dashed: average diameter;
dash-dot: average number of 2nd-order-redundant vertices; dotted: 
$n P_{\rm red}$ as estimated using eqn (\ref{Pred}).}
\label{f.Gnp100}
\end{figure}

\begin{figure*}
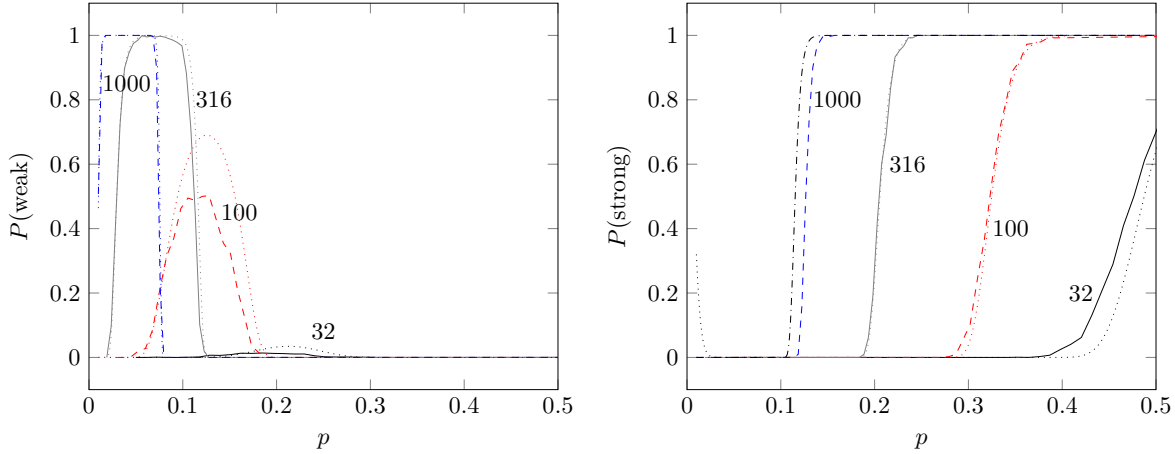

\mytikz{0.9}{Gnpweak}
\mytikz{0.9}{Gnpstrong}
\caption{Probability of weak (left) and strong (right) $G(n,p)$ graphs
as a function of $p$ for $n=32,100,316,1000$. Full and dashed curves: numerical;
dotted curves: eqn (\ref{Pred}). The dash-dot curve shows the probability
that the diameter is less than 3 (eqn (\ref{plowD})) for $n=1000$.}
\label{f.Gnpweak}
\end{figure*}

Figure \ref{f.Gnp100} shows the results of a numerical study
of such graphs on 100 vertices (i.e. $G(100,p)$). For values of $p$ in
the range 0 to 1 a large number of graphs was generated (using a pseudo-random
number generator). 
Graphs of this type almost always form a single connected component once 
$p$ is large enough.
For $n=100$ the proportion that are connected approaches 1 at $p \simeq 0.1$,
and approximately half are connected at $p=0.05$. 
For those that were connected, the diameter, the number of
redundant vertices, and other information was obtained. This information
is plotted in the figures. 

We observe that for these graphs of average diameter near 7 (at $p=0.04$) the
number of redundant vertices is on average around 10, which is 10\% of the
total number of vertices. Then as $p$ increases the average 
number of redundant vertices falls, falling below 1 as the diameter reaches 4.
The number then rises again as the diameter reaches 3 and tends to 100 (i.e. all
the vertices) as the diameter falls to 2. 
Note also that the number of 2nd-order-redundant vertices is quite high 
at all diameters.

Figure \ref{f.Gnpweak} shows
the proportion of these graphs that are weak and strong, as a function of $p$. 

The above features can be calculated to reasonable approximation as follows.
Let
\be
B^n_k(p) := \; ^n\!C_k \, p^k (1-p)^{n-k} 
\ee
where $^n\!C_k$ is a binomial coefficient. $B^n_k(p)$ is the binomial probability
distribution function. We would like to estimate the probability that any
given vertex in a $G(n,p)$ graph is redundant. Consider a vertex $v$ of degree $d$.
It has $^dC_2 = d(d-1)/2$ pairs of neighbours. We require that each pair of neighbours is either adjacent (forming a triangle with $v$) or both adjacent to some
other vertex (forming a square with $v$). The probability for this is
$$
\rule{-3ex}{0pt}P_{\rm red} =  B_0^{n-1}(p) + B_1^{n-1}(p) + \sum_{d=2}^{n-1} 
B_d^{n-1}(p) P_{\rm red}^{(d)}
$$
where $P_{\rm red}^{(d)}$ is given approximately by
\be
\rule{-3ex}{0pt}P_{\rm red}^{(d)} \simeq \left[ p + (1-p)(1 - B_0^{n-3}(p^2)) \right]^{^d C_2} .  \rule{-3ex}{0pt}     \label{Pred}
\ee
This expression is not exact because the different ways to
form squares are not entirely independent. For example, consider five
distinct vertices $s,t,u,v,w$ with $s,t,u \in N(v)$. The probability to have
both the edges $s$-$w$ and $t$-$w$ is $p^2$ and this is the probability to form
the square $\{s,v,t,w\}$ (given that $s,t \in N(v)$). Similarly the probability
to form the square $\{t,v,u,w\}$ is $p^2$.
But the probability to form both these squares is not $p^4$ but $p^3$. This
is not accounted for in (\ref{Pred}), with the result that the expression
underestimates
$P_{\rm red}$ by about a factor 2 when $P_{\rm red}$ is near its
minimum value. Meanwhile it gives $1-P_{\rm red}$ with
high (relative) accuracy when $P_{\rm red} \ll 1$ and therefore it reliably
indicates the range of $p$ values where $(1 - P_{\rm red})^n \simeq 1$
(so the graphs are weak) as well as the range where $(P_{\rm red})^n \simeq 1$
(so the graphs are strong).


\begin{figure}
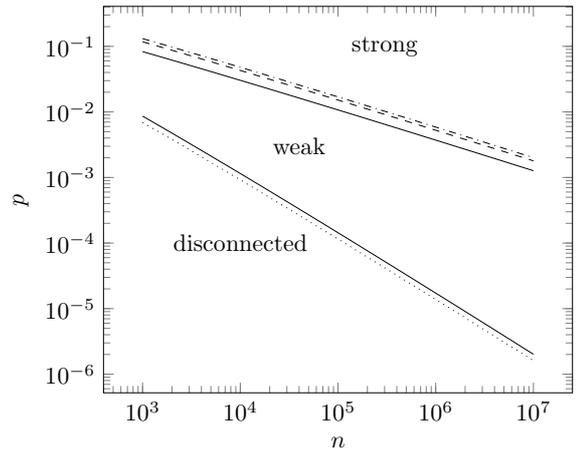

\mytikz{0.9}{Gnpthreshold}
\caption{Thresholds for various behaviours of $G(n,p)$ graphs.
Let $L(n) = \log(n) / n$. Then:
dotted line: $L(n)$ (graph is connected); lower full line: $(5/4)L(n)$ (graph is weak); upper full line: $(L(n))^{1/2}$ (graph no longer weak);
dashed: $(2L(n))^{1/2}$ (diameter is 2); dash-dot: $((5/2)L(n))^{1/2}$ (graph is strong).}
\label{f.Gnpregions}
\end{figure}

For large $n$ the $G(n,p)$ graphs show threshold-type behaviour
for various properties. For example, the probability that the diameter
is $\le 2$ is
\be
\left(1 - (1-p)( 1-p^2 )^{n-2} \right)^{ ^n\!C_2 }  \label{plowD}
\ee
(this is the probability that the vertices in every vertex pair are either 
adjacent to each other or both adjacent to some other vertex). This probability rises
steeply to 1 at a value of $p$ somewhat below the one where $P_{\rm strong}$
rises to 1 (Fig. \ref{f.Gnpweak}). Therefore in the limit of large $n$ the strong $G(n,p)$ graphs
have diameter 2. Nevertheless for $n=100$ and $p \simeq 0.25$ the probability
of obtaining a graph of diameter 3 with half the vertices redundant is 
high (about $0.97$). 

Some thresholds for various behaviours are indicated in Fig. \ref{f.Gnpregions}.

In view of the fact that both weak and strong graphs are very rare as a
fraction of all graphs of given order at high $n$, 
it is noteworthy that they are comparatively common 
among $G(n,p)$ graphs. Indeed for large $n$ there is a range of
$p$ where the $G(n,p)$ graphs are almost all weak---c.f. Figs
\ref{f.Gnpweak}, \ref{f.Gnpregions}. This is perhaps the most remarkable observation reported
in this paper.

\section{Generating strong and weak graphs}

We give brief comments on two tasks: that of combining strong or weak
graphs so as to give further examples, and that of exhaustive generation
of all strong or weak graphs of given order. 

\defin{The operation of {\em merging} two graphs $A$ and $B$ consists
in first finding an induced subgraph of $A$ which is isomorphic to
an induced subgraph of $B$, then 
identifying these subgraphs (that is, announcing that their 
vertices and edges in $A$ are none other than their partners in $B$), 
and then forming the union $(V_A \cup V_B, E_A \cup E_B)$.}

Examples are shown in Fig. \ref{f.merge}

\begin{figure*}
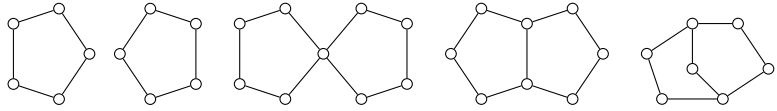

\centering{
\begin{tabular}{cccc}
\tikz[scale=0.2]{\vertices{{( 9, 2)/a},{( 12, 3)/b},{( 12, 7)/c},       
{( 9, 8)/d},{( 7, 5)/e},{( 0, 7)/f},{( 0, 3)/g},{( 3, 2)/h},{( 5, 5)/i},
{( 3, 8)/j}}                                                            
\edges{a/b,a/e,b/c,c/d,d/e,f/g,f/j,g/h,h/i,i/j}                         
}                                                                       
 & 
\tikz[scale=0.2]{\vertices{{( 11, 4)/a},{( 11, 8)/b},{( 8, 9)/c},
{( 5.5, 6)/d},{( 8, 3)/e},{( 0, 8)/f},{( 0, 4)/g},{( 3, 3)/h},   
{( 3, 9)/i}}                                                     
\edges{a/b,a/e,b/c,c/d,d/e,d/h,d/i,f/g,f/i,g/h}                  
}                                                                
 & 
\tikz[scale=0.2]{\vertices{{( 9, 3)/a},{( 11, 6)/b},{( 9, 9)/c},
{( 6, 8)/d},{( 6, 4)/e},{( 3, 9)/f},{( 1, 6)/g},{( 3, 3)/h}}    
\edges{a/b,a/e,b/c,c/d,d/e,d/f,e/h,f/g,g/h}                     
}                                                               
 & 
\tikz[scale=0.2]{\vertices{{( 11, 6)/a},{( 9, 9)/b},{( 6, 9)/c},
{( 6, 6)/d},{( 8, 4)/e},{( 3, 7)/f},{( 4, 4)/g}}                
\edges{a/b,a/e,b/c,c/d,c/f,d/e,e/g,f/g}                         
}                                                               
\end{tabular}

}
\caption{A sequence of weak graphs formed from the merger of
two C5 graphs.}
\label{f.merge}
\end{figure*}

In order that a graph obtained by the merger of two weak graphs
$A$ and $B$ is itself weak, it is 
sufficient if one (or both) of $A \setminus B$ or 
$B\setminus A$ does not offer weak links between vertices in the 
common subgraph.

In order that a graph obtained by the merger of two strong graphs
$A$ and $B$ is itself strong, it is 
sufficient if for each vertex $v$ in the common subgraph, either
$N(v) \subset A$ or $N(v) \subset B$ (or both).

\defin{The {\em short} operation is denoted $*$ and is as follows.
For a given graph $G$ containing vertex $v$, the graph $G * v$
is the one obtained by first adding edges between all the vertices
in $N(v)$ which were not already adjacent (creating a clique) and
then deleting $v$.}

\theorm{\label{th.short}
If a graph $G$ has all vertices redundant then 
$G * v$ has all vertices redundant, $\forall v \in G$.}

\begin{proof}
By lemma \ref{l.strong} the graph $G$ has no weak links. 
If $s$ is the vertex which will be shorted, then we prove that
$G * s$ has no weak links. Let $N=N(s)$ be the open neighbourhood of $s$ in $G$. The pairs of vertices in $G*s$ fall into 3 classes:  
either both, one or neither of the pair are in $N$. If both then those vertices are adjacent in $G * s$ and therefore strongly linked. If neither then no new 2-walk is introduced between them
nor is any existing 2-walk between them removed by shorting $s$,
hence they remain either strongly linked or separated by distance greater
than 2. Finally, 
consider a pair $\{u,v\}$ with $u \in N$, $v \notin N$.The interesting
case is where $v$ is adjacent to $u$. In this case the 
{\em short} operation introduces 2-walks from $v$ via $u$ to
any vertex in $N$ which was not previously adjacent to $u$. The question
then
arises whether there is thus introduced a weak link such that $u$
is no longer redundant. However if $v$ is adjacent to one member of $N$
then it must be adjacent to another by lemma \ref{str_simple}. It
follows that after the short there will be a strong link 
from $v$ to each member of $N$ so $u$ is not redundant.\qed
\end{proof}


One use of this theorem is to show that there exists a method to
exhaustively generate all strong graphs of order $n+1$ from those of order
$n$ without requiring the generation of all graphs of order $n+1$.
For each strong graph of order $n$ one performs an operation which
might be called `un-shorting'. That is, 
attach a vertex of degree $k$ to all members of
a $k$-clique, then remove zero or more edges among that $k$-clique.
To be sure of generating all possible strong graphs of order $n+1$
one must try this operation for a sufficient selection of the cliques
of the starting graph, but it is not necessary to try them all, because
the theorem allows a short of any vertex. One may imagine that to 
produce the smaller graph, a vertex was selected for shorting in 
the larger graph
with some specific set of vertex-invariant properties, such as least
degree and least number of triangles, etc. Therefore in order to reverse
such an operation it is sufficient to arrange that the introduced vertex
shall have those same properties. 

We have investigated a similar method for weak graphs, as follows.

\defin{The {\em partial short} operation is like the {\em short} operation
	but fewer edges are introduced. A partial short of
vertex $v$ consists
in introducing among the neighbours of $v$ the least number of
edges that suffice to raise the degrees of those vertices above 2,
and then deleting $v$. (In other words, the introduced edges just
suffice to leave no pendants remaining after the operation). The
specific set of edges to be introduced is left otherwise unspecified.}

\begin{conjecture}
The following operation on a weak graph will yield a weak
graph on fewer vertices. Identify the vertices with the least
number of degree-2 neighbours, and among those the ones of least
degree. Partially short one such vertex. Having done so, judiciously
remove further edges until the graph is weak.
\end{conjecture}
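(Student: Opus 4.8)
The plan is to recast the goal through Lemma~\ref{l.cycles} (a graph is weak iff every vertex lies on a chordless cycle of length $>4$), together with the equivalent formulation behind Lemma~\ref{l.strong} (weak iff no vertex has all of its neighbour-pairs strongly linked). Write $G$ for the given weak graph, $v$ for the vertex selected by the rule, and let $G_1 = (G-v)+F$ be the partial short, where $F$ is a minimal set of edges inside $N(v)$ removing all pendants. The first reduction is routine: a weak graph has no endpoints and no isolated vertices, so every neighbour of $v$ has degree $\ge 2$ in $G$ and hence degree $\ge 1$ in $G-v$; thus $F$ exists, and $G_1$ has no pendants and no isolated vertices. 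So the only obstruction to $G_1$ being weak is that some set $R$ of vertices is redundant in $G_1$, and the substance of the conjecture is that a well-chosen set of edge deletions can destroy all of $R$ while re-introducing no pendant.

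Next I would pin down $R$. Deleting $v$ alone cannot create a redundant vertex: for $u,w \ne v$ the adjacency of $u,w$ is untouched and their common-neighbour count can only drop, so a weak link stays weak (or becomes a separated pair) and a strong link can only become weak, never the reverse; in particular the weakly-linked witness pair of neighbours of $v$ (which exists precisely because $v$ is non-redundant) becomes a pair at distance $\ge 3$ in $G-v$. Hence every member of $R$ owes its redundancy entirely to the \emph{added} edges of $F$: adding $ab \in F$ can upgrade a weak link $\{c,b\}$ to a strong one only when $c \in N(a)\setminus N(b)$ and $\{c,b\}$ had at most one common neighbour in $G-v$. Thus $|R|$ is controlled by $|F|$, and $|F|$ is in turn bounded by roughly half the number of degree-2 neighbours of $v$ --- exactly the quantity the selection rule minimises, with ``least degree'' as a secondary tie-break that keeps $N(v)$, and therefore the pool of candidate edges, small.

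The crux is the ``judicious removal'' step. For each $u \in R$ the plan is to restore one weak link at $u$ by deleting a single edge, the natural candidate being an edge of $F$ itself (undoing exactly the strong link it created), with an original edge incident to $N(u)$ used only when forced. One then has to show these local repairs can be carried out \emph{simultaneously} without lowering any vertex to degree $1$ and without re-redundifying some $u'$ that an earlier removal had just fixed. I would try to organise this as a Hall-type / greedy selection on the bipartite incidence between $R$ and the admissible repair edges, using the extremal choice of $v$ to guarantee each $u \in R$ has enough admissible repairs available (each such $u$ still carries most of a chordless $\ge 5$-cycle inherited from $G$, since $G_1$ differs from $G$ only at $v$ and inside $N(v)$) and that the conflict graph among the chosen removals is sparse enough to be resolvable. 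The main obstacle is precisely the possibility of irreducible conflicts --- a removal that helps one vertex creating a pendant or a redundancy elsewhere --- so the real theorem hidden here is an existence statement about conflict-free repairs; I expect the proof to hinge on a careful case analysis of how the pendant-repair edges of $F$ cluster around the low-degree-2-count vertex $v$, and it is conceivable that a clean argument needs a slightly stronger selection rule or a weaker conclusion (for instance, permitting the removal of two vertices rather than one).

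Finally, since this is offered as a conjecture, before committing to the argument above I would first verify the procedure exhaustively over all weak graphs up to order $11$ or $12$ with some deterministic edge-removal heuristic --- both to rule out a small counterexample and to extract the tight examples that reveal what ``judiciously'' must mean --- and let those examples drive the case analysis in the crux step.
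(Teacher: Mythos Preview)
The paper does not prove this statement: it is offered explicitly as a \emph{Conjecture}, and the only supporting evidence given is the parenthetical remark ``(The conjecture was tested for small graphs.)'' There is therefore no proof in the paper to compare your proposal against. Your final paragraph --- verify exhaustively on small weak graphs first and let the tight examples drive the analysis --- is essentially all the paper itself does.

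On the substance of your sketch: one step is not as clean as you present it. You claim that deleting $v$ alone cannot create a redundant vertex, arguing that for $u,w \ne v$ a weak link stays weak and a strong link can only weaken. That is true for pairs not involving $v$, but it does not settle the case of a vertex $x \in N(v)$ whose \emph{only} weakly-linked neighbour pair in $G$ is $\{v,b\}$. When $v$ is deleted that witness pair disappears, and if every remaining pair of neighbours of $x$ was already strongly linked in $G$ through vertices other than $v$, then $x$ becomes redundant in $G-v$ before any edge of $F$ is added. So redundancy in $G_1$ need not be ``owed entirely to the added edges of $F$''; some of it can come from losing $v$ as a witness endpoint. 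This does not kill the overall strategy --- such $x$ are still confined to $N(v)$ and the same local-repair machinery could in principle handle them --- but it does mean your bound on $|R|$ via $|F|$ is not yet justified, and the conflict-resolution step has to absorb more cases than you allow for. Given that the paper itself leaves the statement open, the honest status of your write-up is ``plausible programme with an identified crux'', which is also how you frame it.
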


(The conjecture was tested for small graphs.)

\section{Enumeration}

The numbers of graphs of order up to 10 with various properties
related to distance-redundancy are shown in Tables \ref{t.stat} and
\ref{t.statcon}.


\begin{table}
\begin{tabular}{c|rrrrrr}
$n$ & no s. & all s. & no r. & all r. & \!\!r $\neg$ s & r $\neg$ s \\ 
    &       &        &       &        &            & \!\!$D\!>\!2$ \\
\hline
1 & 1& 0& 0& 1& 1 & 0\\
2 & 0& 2& 0& 2& 0 & 0\\
3 & 0& 3& 0& 3& 0 & 0\\
4 & 0& 7& 0& 7& 0 & 0\\
5 & 1& 14& 1& 15& 0 & 0\\
6 & 2& 41& 1& 50& 1 & 0\\
7 & 8& 121& 4& 202& 3 & 0\\
8 & 68& 499& 15& 1509& 21 & 2 \\
9 & 1338& 2644 &  168 & 22889 & 311 & 5  \\
10& \!\!77738& \!\!22571 & \!\!2253 & \!\!833279 & 15047 & 494 \\
\end{tabular}
\caption{Statistics of small graphs. The columns indicate the
number of graphs of order $n$ with, respectively: 
no surrounded vertices, all surrounded
vertices, no redundant, all redundant, all redundant and not surrounded,
all redundant not surrounded and graph diameter above 2.}
\label{t.stat}
\end{table}

\begin{table}
\begin{tabular}{c|rrrrrr}
$n$ & no s. & all s. & no r. & all r. & \!\!r $\neg$ s & r $\neg$ s \\ 
    &       &        &       &        &            & \!\!$D\!>\!2$ \\
\hline
 1& 0& 0& 0& 0& 0& 0 \\        
 2& 0& 0& 0& 0& 0& 0 \\        
 3& 0& 1& 0& 1& 0& 0 \\        
 4& 0& 3& 0& 3& 0& 0 \\        
 5& 1& 6& 1& 7& 0& 0 \\        
 6& 2& 22& 1& 30& 1& 0 \\      
 7& 8& 70& 4& 141& 3& 0 \\     
 8& 68& 339& 15& 1259& 21& 2 \\
 9& 1338 & 2024 & 168& 21176& 311& 5 \\
 10& \!\!77737 & \!\!19389 &  \!\!2252 & \!\!808821 & \!\!15047  &  \!\!494
\end{tabular}
\caption{Statistics of small connected graphs. The columns are as for
table \ref{t.stat}.}
\label{t.statcon}
\end{table}

The weak graphs of order $n$ are a very small fraction of the set of all graphs 
of order $n$. The evidence for small $n$ suggests it is a fraction which falls exponentially with $n$. That this continues for large $n$ is suggested by the
fact that for each weak graph $G$ there are a large number
of further graphs with one or more redundant vertices which can be obtained
by adding edges to, or removing edges from, $G$. I have not attempted
to enumerate weak graphs for general $n$.

For strong graphs a large subset can be enumerated as follows.

First note that a graph $G$ of order $n-1$ will yield a strong
graph of order $n$ when a complete vertex (one of degree $n-1$) is added,
if and only if $G$ has diameter less than 3. (Proof: if
the diameter of $G$ is 3 or more, then the full vertex is not redundant in the graph of order $n$; if the diameter of $G$ is 1 or 2, then every vertex pair is strongly linked in the graph of order $n$).  
Next observe that any graph of order
$n-2$ will yield a strong graph of order $n$ by the method of lemma
\ref{l.strongsub}(i). More generally, for $k>1$ any graph of order $n-k$ will yield
a strong graph of order $n$ by the addition of $k$ weak twins, each
of which is adjacent to all the vertices of the starting graph. We will 
now count these strong graphs. We limit our
study to the addition of weak twins, not strong twins (this makes it
easier to avoid double-counting). 

Consider the complement of a graph which has $k$ weak twins of degree
$n-k$. In the complement these vertices will have degree $k-1$ and they
will all be adjacent to each other, therefore they will form the whole
of a connected component equal to the complete graph $K_k$. Hence the
number of graphs with $k$ weak twins of degree $n-k$ 
is equal to the number of graphs of order $n$
which have a component equal to $K_k$. 

Let $g_n$ be the total number of simple graphs of order $n$.

We want to count graphs having either a $K_1$ or a $K_2$ or a $K_3$
component, and so on, (so as to count those with the corresponding sets of weak 
twins). Consider first $K_1$ and $K_2$. At order $n$ there are $g_{n-1}$
graphs with at least one $K_1$ component, and $g_{n-2}$ with at least one
$K_2$ component, and $g_{n-3}$ with both a $K_1$ and a $K_2$. Hence the
number with either one or more $K_1$ or  one or more $K_2$ or both is $g_{n-1} + g_{n-2} - g_{n-3}$.

Proceeding now to $K_1$, $K_2$, $K_3$ we start with $g_{n-1}+g_{n-2}+g_{n-3}$
then subtract from this the $\{1,2\}$ intersection which is of size $g_{n-3}$,
and the $\{1,3\}$ and $\{2,3\}$ intersections, of size $g_{n-4},\;g_{n-5}$
respectively. At this point we have added the $\{1,2,3\}$ intersection once
and subtracted it three times so we need to add it back in, giving the total
$$
\left( g_{n-1}+g_{n-2}+g_{n-3} \right) 
- \left( g_{n-3} + g_{n-4} + g_{n-5} \right) + g_{n-6}.
$$
Proceeding similarly for any $k$, we have that the number of graphs
of order $n$ containing one or more connected components $K_k$,
where $k$ may be any value in the range $1 \le k \le n$, is
\be
\kappa_n = \sum_{z=1}^n   \sum_{|S|=z} (-1)^{z-1}   g_S 
\label{countK}
\ee
where $N = \{1,2,3, \ldots n\}$ is the set of positive integers up to $n$;
$S \subset N$ is a subset of $N$; the second sum is a sum over
such subsets of size $z$, and $g_S$ is a shorthand for 
$g_{n-m}$ where $m = \sum_{s \in S} s$ (the sum of the members of $S$). 
The values of $\kappa_n$ for small $n$ are shown in table \ref{t.Kk}.

$\kappa_n$ is not itself the number of strong graphs of the type
under investigation, but with an adjustment it is. 
In the case of a full vertex (one of degree $n-1$) the graph is strong
iff the other $n-1$ vertices induce a graph of diameter 1 or 2,
as already noted. So (\ref{countK}) will count strong graphs of the
type under investigation if we replace $g_{n-1}$ in the sum
by the number of
graphs of order $n-1$ with diameter $< 3$. Example values
are shown in the last row of table~\ref{t.Kk}.
As an example: we learn from this table that, of the 1259 connected strong
graphs of order 8, there are 524 which have either a full vertex,
or a set of $k$ weak twins of degree $n-k$ for one or more $k$ in
the range $1 < k < n$. (The total 525 recorded
in the table also includes the empty graph.) The last row
in the table shows that these account for about half the connected
strong graphs of order 8 with diameter 1 or 2.

\begin{table*}
\begin{tabular}{ccccccccccccccc}
$n$    & 1& 2& 3& 4& 5& 6& 7& 8& 9& 10& 11 \\       
\hline               
all    & 1& 2& 3& 6& 14& 44& 187& 1195& 13377& 286976& 12279669 \\
strong & 1& 2& 2& 4& 8& 25& 91& 525& 5186& 103827&  4391398 \\
$D < 3$& 1& 1& 1& 3& 7& 27& 122& 1064& 18423& 719783 
\end{tabular}
\caption{The row `all' counts all graphs having one or more $K_k$ components
(of whatever $k$), given by (\ref{countK}). The row `strong' counts strong graphs having one or more full vertices or a set or sets of weak twins
surrounding all the other vertices, given by~(\ref{countK}) with $g_{n-1}$
replaced as explained in the text.
The third row gives the number of connected strong graphs of diameter less than $3$. The empty graph of order $n$ contributes one graph to the count in the second row, but not the third row.}
\label{t.Kk}
\end{table*}

\section{Nibbling and mesh graphs}

Let us consider the possibility of deleting a redundant 
vertex (if there is one), and
then deleting a redundant vertex (if there is one) in the graph that results,
and so continuing until the graph is either empty or weak. We shall
call this operation {\em nibbling}. 

\begin{figure}
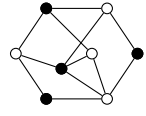

\centering{
\input nibble8.tex
}
\caption{An example of nibbling. The filled vertices are redundant. If one
first deletes the central redundant vertex the resulting graph is weak so
cannot be further reduced by nibbling. But if one first deletes one of
the other redundant vertices the resulting graph nibbles to zero.}
\label{f.nibble8}
\end{figure}

It can happen that a graph has more than one redundant vertex, such that
deleting one of them produces a weak graph (thus preventing further
nibbling) whereas deleting another does not. An example is shown in figure
\ref{f.nibble8}. Thus the question, whether or not any given graph can
be nibbled to zero, can depend on the choices made of which vertex to delete
at any stage.

Among the graphs that cannot be nibbled to zero are the weak graphs and
those obtained from a weak graph by the operation of adding
a simplicial vertex (that is, introducing a vertex and making it adjacent
to a clique in the weak graph) or splitting a vertex into twins. The set
of graphs that can be obtained this way
is a very small fraction of all graphs, which suggests that most
graphs can be nibbled to zero. That this is so is supported by some numerical
studies on small graphs, of which the results are shown in table \ref{t.nibble}.
In a test, 81\% of connected graphs on 10 vertices nibbled to
zero; 74\% of connected graphs on 10 vertices with average distance
above 2 nibbled to zero; 91\% of connected graphs on 10 vertices with average distance above 2 nibbled to zero or C5. Most $G(n,p)$ random graphs
can be reduced significantly by nibbling when 
$p$ is outside the range leading to a weak graph (c.f. Fig. \ref{f.Gnpweak}).

\begin{table}
\begin{tabular}{cccccc}
$n$ & NG$_n$ & rem & $d_{\rm av}>2$ & rem & C5 \\
8  & 11117 & 789   & 347            & 37  & 23 \\
9  &  261080 & 117720   & 4690      & 861 & 568 \\
10\!\!&\!\!11716571&\!\!\!\!2173100  & 89094     & 22916 & 14922
\end{tabular}
\caption{Statistics regarding nibbling. The first column shows the order,
the second shows the number of connected graphs, the third (`rem') the
number which did not nibble to zero. The fourth column shows the
number of connected graphs with average distance above 2, the fifth
the number of these which did not nibble to zero and the last column
shows how many of those nibbled to C5. The values in columns 3,5,6
may vary slightly depending on choices made
in the nibbling process.}
\label{t.nibble}
\end{table}

Consider now the regular {\em mesh graphs}, which we define as those with edges
and vertices corresponding to a regular tiling of
the Euclidean plane. We shall consider finite examples of such graphs,
that is, those that can be obtained by deleting vertices from an
infinite tiling such that a finite number remain. 

The tiling into hexagons yields a weak graph unless some vertex has all
neighbours, or all but one, deleted. We shall not discuss it further.
The tiling into squares or triangles yields a graph with redundant vertices
at corners where the boundary is convex. It is not 
hard to see that if the tiled region is simply
connected (in the topological sense) then the associated graph is guaranteed
to have redundant vertices and therefore will be reduced to zero under
nibbling operations. 

This leads to the following observation.

\begin{figure*}
	\centering{
		\begin{tabular}{ccc}
			\input meshA1.tex
			&
			\input meshA2.tex
			&
			\begin{tikzpicture}[scale= 0.25]
\draw( 1, 1.73205081)--( 1.25, 2.16506351);
\draw( 1, 1.73205081)--( 1.25, 1.29903811);
\draw( 1, 13.8564065)--( 1.25, 14.2894192);
\draw( 1, 13.8564065)--( 1.25, 13.4233938);
\draw( 1.5, 0.866025404)--( 2, 0.866025404);
\draw( 1.5, 0.866025404)--( 1.25, 1.29903811);
\draw( 1.5, 2.59807621)--( 2, 2.59807621);
\draw( 1.5, 2.59807621)--( 1.25, 2.16506351);
\draw( 2, 12.1243557)--( 2.25, 12.5573684);
\draw( 2, 12.1243557)--( 1.75, 12.5573684);
\draw( 2, 12.1243557)--( 2.25, 11.691343);
\draw( 1.5, 12.9903811)--( 2, 12.9903811);
\draw( 1.5, 12.9903811)--( 1.25, 13.4233938);
\draw( 1.5, 12.9903811)--( 1.75, 12.5573684);
\draw( 1.5, 14.7224319)--( 2, 14.7224319);
\draw( 1.5, 14.7224319)--( 1.25, 14.2894192);
\draw( 2.5, 0.866025404)--( 3, 0.866025404);
\draw( 2.5, 0.866025404)--( 2.75, 1.29903811);
\draw( 2.5, 0.866025404)--( 2, 0.866025404);
\draw( 3, 1.73205081)--( 2.75, 2.16506351);
\draw( 3, 1.73205081)--( 2.75, 1.29903811);
\draw( 3, 1.73205081)--( 3.25, 1.29903811);
\draw( 2.5, 2.59807621)--( 2, 2.59807621);
\draw( 2.5, 2.59807621)--( 2.75, 2.16506351);
\draw( 3, 10.3923048)--( 2.75, 10.8253175);
\draw( 3, 10.3923048)--( 3.25, 9.95929214);
\draw( 2.5, 11.2583302)--( 2.25, 11.691343);
\draw( 2.5, 11.2583302)--( 2.75, 10.8253175);
\draw( 2.5, 12.9903811)--( 2.75, 13.4233938);
\draw( 2.5, 12.9903811)--( 2, 12.9903811);
\draw( 2.5, 12.9903811)--( 2.25, 12.5573684);
\draw( 3, 13.8564065)--( 2.75, 14.2894192);
\draw( 3, 13.8564065)--( 2.75, 13.4233938);
\draw( 2.5, 14.7224319)--( 2, 14.7224319);
\draw( 2.5, 14.7224319)--( 2.75, 14.2894192);
\draw( 3.5, 0.866025404)--( 4, 0.866025404);
\draw( 3.5, 0.866025404)--( 3.25, 1.29903811);
\draw( 3.5, 0.866025404)--( 3, 0.866025404);
\draw( 4, 8.66025404)--( 4.5, 8.66025404);
\draw( 4, 8.66025404)--( 3.75, 9.09326674);
\draw( 3.5, 9.52627944)--( 3.25, 9.95929214);
\draw( 3.5, 9.52627944)--( 3.75, 9.09326674);
\draw( 4.5, 0.866025404)--( 5, 0.866025404);
\draw( 4.5, 0.866025404)--( 4, 0.866025404);
\draw( 5, 8.66025404)--( 5.25, 9.09326674);
\draw( 5, 8.66025404)--( 4.5, 8.66025404);
\draw( 5.5, 0.866025404)--( 6, 0.866025404);
\draw( 5.5, 0.866025404)--( 5, 0.866025404);
\draw( 5.5, 9.52627944)--( 5.75, 9.95929214);
\draw( 5.5, 9.52627944)--( 5.25, 9.09326674);
\draw( 6, 10.3923048)--( 6.25, 10.8253175);
\draw( 6, 10.3923048)--( 5.75, 9.95929214);
\draw( 6.5, 0.866025404)--( 7, 0.866025404);
\draw( 6.5, 0.866025404)--( 6, 0.866025404);
\draw( 6.5, 11.2583302)--( 6.75, 11.691343);
\draw( 6.5, 11.2583302)--( 6.25, 10.8253175);
\draw( 7, 12.1243557)--( 7.25, 12.5573684);
\draw( 7, 12.1243557)--( 6.75, 11.691343);
\draw( 7.5, 0.866025404)--( 8, 0.866025404);
\draw( 7.5, 0.866025404)--( 7, 0.866025404);
\draw( 7.5, 12.9903811)--( 8, 12.9903811);
\draw( 7.5, 12.9903811)--( 7.25, 12.5573684);
\draw( 8.5, 0.866025404)--( 9, 0.866025404);
\draw( 8.5, 0.866025404)--( 8, 0.866025404);
\draw( 9, 8.66025404)--( 9.5, 8.66025404);
\draw( 9, 8.66025404)--( 8.75, 9.09326674);
\draw( 8.5, 9.52627944)--( 8.75, 9.95929214);
\draw( 8.5, 9.52627944)--( 8.75, 9.09326674);
\draw( 9, 10.3923048)--( 9.5, 10.3923048);
\draw( 9, 10.3923048)--( 8.75, 9.95929214);
\draw( 8.5, 12.9903811)--( 9, 12.9903811);
\draw( 8.5, 12.9903811)--( 8, 12.9903811);
\draw( 9.5, 0.866025404)--( 10, 0.866025404);
\draw( 9.5, 0.866025404)--( 9, 0.866025404);
\draw( 10, 8.66025404)--( 10.5, 8.66025404);
\draw( 10, 8.66025404)--( 9.5, 8.66025404);
\draw( 10, 10.3923048)--( 10.25, 10.8253175);
\draw( 10, 10.3923048)--( 9.5, 10.3923048);
\draw( 9.5, 12.9903811)--( 10, 12.9903811);
\draw( 9.5, 12.9903811)--( 9, 12.9903811);
\draw( 10.5, 0.866025404)--( 11, 0.866025404);
\draw( 10.5, 0.866025404)--( 10, 0.866025404);
\draw( 11, 8.66025404)--( 11.5, 8.66025404);
\draw( 11, 8.66025404)--( 10.5, 8.66025404);
\draw( 10.5, 11.2583302)--( 11, 11.2583302);
\draw( 10.5, 11.2583302)--( 10.25, 10.8253175);
\draw( 10.5, 12.9903811)--( 11, 12.9903811);
\draw( 10.5, 12.9903811)--( 10, 12.9903811);
\draw( 11.5, 0.866025404)--( 12, 0.866025404);
\draw( 11.5, 0.866025404)--( 11, 0.866025404);
\draw( 12, 8.66025404)--( 11.5, 8.66025404);
\draw( 12, 8.66025404)--( 12.25, 8.22724134);
\draw( 11.5, 11.2583302)--( 12, 11.2583302);
\draw( 11.5, 11.2583302)--( 11, 11.2583302);
\draw( 11.5, 12.9903811)--( 12, 12.9903811);
\draw( 11.5, 12.9903811)--( 11, 12.9903811);
\draw( 12.5, 0.866025404)--( 12.75, 1.29903811);
\draw( 12.5, 0.866025404)--( 12, 0.866025404);
\draw( 13, 1.73205081)--( 13.25, 2.16506351);
\draw( 13, 1.73205081)--( 12.75, 1.29903811);
\draw( 12.5, 7.79422863)--( 13, 7.79422863);
\draw( 12.5, 7.79422863)--( 12.25, 8.22724134);
\draw( 12.5, 11.2583302)--( 12.75, 11.691343);
\draw( 12.5, 11.2583302)--( 12, 11.2583302);
\draw( 13, 12.1243557)--( 12.75, 12.5573684);
\draw( 13, 12.1243557)--( 12.75, 11.691343);
\draw( 12.5, 12.9903811)--( 12, 12.9903811);
\draw( 12.5, 12.9903811)--( 12.75, 12.5573684);
\draw( 13.5, 2.59807621)--( 13.75, 3.03108891);
\draw( 13.5, 2.59807621)--( 13.25, 2.16506351);
\draw( 14, 3.46410162)--( 13.75, 3.89711432);
\draw( 14, 3.46410162)--( 13.75, 3.03108891);
\draw( 13.5, 4.33012702)--( 13.75, 4.76313972);
\draw( 13.5, 4.33012702)--( 13.75, 3.89711432);
\draw( 14, 5.19615242)--( 14.25, 5.62916512);
\draw( 14, 5.19615242)--( 13.75, 4.76313972);
\draw( 13.5, 7.79422863)--( 14, 7.79422863);
\draw( 13.5, 7.79422863)--( 13, 7.79422863);
\draw( 14.5, 6.06217783)--( 14.75, 6.49519053);
\draw( 14.5, 6.06217783)--( 14.25, 5.62916512);
\draw( 15, 6.92820323)--( 14.75, 7.36121593);
\draw( 15, 6.92820323)--( 14.75, 6.49519053);
\draw( 14.5, 7.79422863)--( 14, 7.79422863);
\draw( 14.5, 7.79422863)--( 14.75, 7.36121593);
\end{tikzpicture}
	\end{tabular}}
	\caption{An illustration of the nibbling operation applied to a mesh
		graph in a simply connected region. The left diagram shows the starting
		graph (after deleting two vertices). The middle diagram shows an intermediate
		result. The right diagram shows the final weak graph which is isometric with
		the starting graph.}
	\label{f.meshnibble1}
\end{figure*}
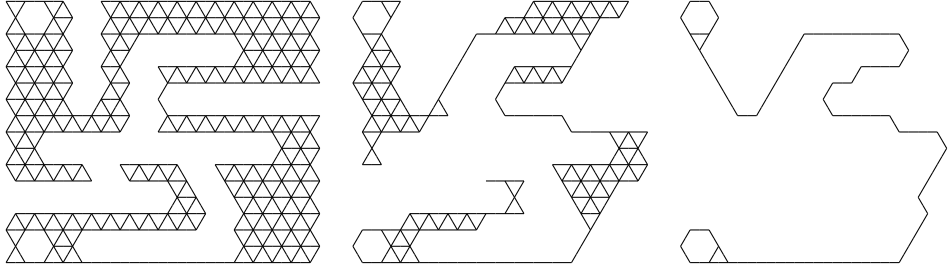

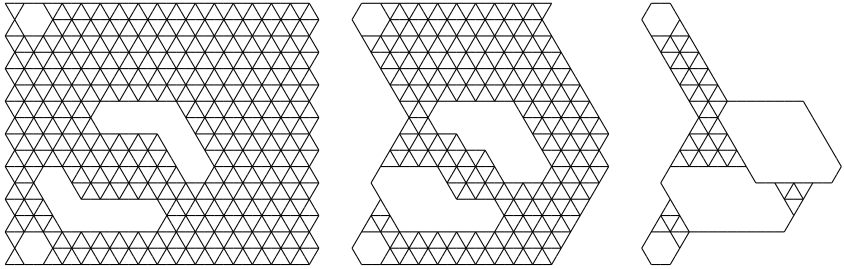
\begin{figure*}
	\centering{
		\begin{tabular}{ccc}
			\input meshB1.tex
			&
			\input meshB2.tex
			&
			\begin{tikzpicture}[scale= 0.25]
\draw( 1, 1.73205081)--( 1.25, 2.16506351);
\draw( 1, 1.73205081)--( 1.25, 1.29903811);
\draw( 1, 13.8564065)--( 1.25, 14.2894192);
\draw( 1, 13.8564065)--( 1.25, 13.4233938);
\draw( 1.5, 0.866025404)--( 2, 0.866025404);
\draw( 1.5, 0.866025404)--( 1.25, 1.29903811);
\draw( 1.5, 2.59807621)--( 2, 2.59807621);
\draw( 1.5, 2.59807621)--( 1.75, 3.03108891);
\draw( 1.5, 2.59807621)--( 1.25, 2.16506351);
\draw( 2, 3.46410162)--( 2.5, 3.46410162);
\draw( 2, 3.46410162)--( 2.25, 3.89711432);
\draw( 2, 3.46410162)--( 1.75, 3.03108891);
\draw( 2, 3.46410162)--( 2.25, 3.03108891);
\draw( 2, 5.19615242)--( 2.25, 5.62916512);
\draw( 2, 5.19615242)--( 2.25, 4.76313972);
\draw( 2, 12.1243557)--( 2.5, 12.1243557);
\draw( 2, 12.1243557)--( 2.25, 12.5573684);
\draw( 2, 12.1243557)--( 1.75, 12.5573684);
\draw( 2, 12.1243557)--( 2.25, 11.691343);
\draw( 1.5, 12.9903811)--( 2, 12.9903811);
\draw( 1.5, 12.9903811)--( 1.25, 13.4233938);
\draw( 1.5, 12.9903811)--( 1.75, 12.5573684);
\draw( 1.5, 14.7224319)--( 2, 14.7224319);
\draw( 1.5, 14.7224319)--( 1.25, 14.2894192);
\draw( 2.5, 0.866025404)--( 2.75, 1.29903811);
\draw( 2.5, 0.866025404)--( 2, 0.866025404);
\draw( 3, 1.73205081)--( 3.25, 2.16506351);
\draw( 3, 1.73205081)--( 2.75, 2.16506351);
\draw( 3, 1.73205081)--( 2.75, 1.29903811);
\draw( 2.5, 2.59807621)--( 3, 2.59807621);
\draw( 2.5, 2.59807621)--( 2.75, 3.03108891);
\draw( 2.5, 2.59807621)--( 2.25, 3.03108891);
\draw( 2.5, 2.59807621)--( 2, 2.59807621);
\draw( 2.5, 2.59807621)--( 2.75, 2.16506351);
\draw( 3, 3.46410162)--( 2.75, 3.89711432);
\draw( 3, 3.46410162)--( 2.5, 3.46410162);
\draw( 3, 3.46410162)--( 2.75, 3.03108891);
\draw( 3, 3.46410162)--( 3.25, 3.03108891);
\draw( 2.5, 4.33012702)--( 2.25, 4.76313972);
\draw( 2.5, 4.33012702)--( 2.25, 3.89711432);
\draw( 2.5, 4.33012702)--( 2.75, 3.89711432);
\draw( 2.5, 6.06217783)--( 3, 6.06217783);
\draw( 2.5, 6.06217783)--( 2.75, 6.49519053);
\draw( 2.5, 6.06217783)--( 2.25, 5.62916512);
\draw( 3, 6.92820323)--( 3.5, 6.92820323);
\draw( 3, 6.92820323)--( 3.25, 7.36121593);
\draw( 3, 6.92820323)--( 2.75, 6.49519053);
\draw( 3, 6.92820323)--( 3.25, 6.49519053);
\draw( 3, 10.3923048)--( 3.5, 10.3923048);
\draw( 3, 10.3923048)--( 3.25, 10.8253175);
\draw( 3, 10.3923048)--( 2.75, 10.8253175);
\draw( 3, 10.3923048)--( 3.25, 9.95929214);
\draw( 2.5, 11.2583302)--( 3, 11.2583302);
\draw( 2.5, 11.2583302)--( 2.75, 11.691343);
\draw( 2.5, 11.2583302)--( 2.25, 11.691343);
\draw( 2.5, 11.2583302)--( 2.75, 10.8253175);
\draw( 3, 12.1243557)--( 3.5, 12.1243557);
\draw( 3, 12.1243557)--( 3.25, 12.5573684);
\draw( 3, 12.1243557)--( 2.75, 12.5573684);
\draw( 3, 12.1243557)--( 2.5, 12.1243557);
\draw( 3, 12.1243557)--( 2.75, 11.691343);
\draw( 3, 12.1243557)--( 3.25, 11.691343);
\draw( 2.5, 12.9903811)--( 3, 12.9903811);
\draw( 2.5, 12.9903811)--( 2.75, 13.4233938);
\draw( 2.5, 12.9903811)--( 2, 12.9903811);
\draw( 2.5, 12.9903811)--( 2.25, 12.5573684);
\draw( 2.5, 12.9903811)--( 2.75, 12.5573684);
\draw( 3, 13.8564065)--( 2.75, 14.2894192);
\draw( 3, 13.8564065)--( 2.75, 13.4233938);
\draw( 3, 13.8564065)--( 3.25, 13.4233938);
\draw( 2.5, 14.7224319)--( 2, 14.7224319);
\draw( 2.5, 14.7224319)--( 2.75, 14.2894192);
\draw( 3.5, 2.59807621)--( 4, 2.59807621);
\draw( 3.5, 2.59807621)--( 3.25, 3.03108891);
\draw( 3.5, 2.59807621)--( 3, 2.59807621);
\draw( 3.5, 2.59807621)--( 3.25, 2.16506351);
\draw( 3.5, 6.06217783)--( 4, 6.06217783);
\draw( 3.5, 6.06217783)--( 3.75, 6.49519053);
\draw( 3.5, 6.06217783)--( 3.25, 6.49519053);
\draw( 3.5, 6.06217783)--( 3, 6.06217783);
\draw( 4, 6.92820323)--( 4.5, 6.92820323);
\draw( 4, 6.92820323)--( 4.25, 7.36121593);
\draw( 4, 6.92820323)--( 3.75, 7.36121593);
\draw( 4, 6.92820323)--( 3.5, 6.92820323);
\draw( 4, 6.92820323)--( 3.75, 6.49519053);
\draw( 4, 6.92820323)--( 4.25, 6.49519053);
\draw( 3.5, 7.79422863)--( 4, 7.79422863);
\draw( 3.5, 7.79422863)--( 3.75, 8.22724134);
\draw( 3.5, 7.79422863)--( 3.25, 7.36121593);
\draw( 3.5, 7.79422863)--( 3.75, 7.36121593);
\draw( 4, 8.66025404)--( 4.5, 8.66025404);
\draw( 4, 8.66025404)--( 4.25, 9.09326674);
\draw( 4, 8.66025404)--( 3.75, 9.09326674);
\draw( 4, 8.66025404)--( 3.75, 8.22724134);
\draw( 4, 8.66025404)--( 4.25, 8.22724134);
\draw( 3.5, 9.52627944)--( 4, 9.52627944);
\draw( 3.5, 9.52627944)--( 3.75, 9.95929214);
\draw( 3.5, 9.52627944)--( 3.25, 9.95929214);
\draw( 3.5, 9.52627944)--( 3.75, 9.09326674);
\draw( 4, 10.3923048)--( 4.5, 10.3923048);
\draw( 4, 10.3923048)--( 4.25, 10.8253175);
\draw( 4, 10.3923048)--( 3.75, 10.8253175);
\draw( 4, 10.3923048)--( 3.5, 10.3923048);
\draw( 4, 10.3923048)--( 3.75, 9.95929214);
\draw( 4, 10.3923048)--( 4.25, 9.95929214);
\draw( 3.5, 11.2583302)--( 4, 11.2583302);
\draw( 3.5, 11.2583302)--( 3.75, 11.691343);
\draw( 3.5, 11.2583302)--( 3.25, 11.691343);
\draw( 3.5, 11.2583302)--( 3, 11.2583302);
\draw( 3.5, 11.2583302)--( 3.25, 10.8253175);
\draw( 3.5, 11.2583302)--( 3.75, 10.8253175);
\draw( 4, 12.1243557)--( 3.75, 12.5573684);
\draw( 4, 12.1243557)--( 3.5, 12.1243557);
\draw( 4, 12.1243557)--( 3.75, 11.691343);
\draw( 4, 12.1243557)--( 4.25, 11.691343);
\draw( 3.5, 12.9903811)--( 3.25, 13.4233938);
\draw( 3.5, 12.9903811)--( 3, 12.9903811);
\draw( 3.5, 12.9903811)--( 3.25, 12.5573684);
\draw( 3.5, 12.9903811)--( 3.75, 12.5573684);
\draw( 4.5, 2.59807621)--( 5, 2.59807621);
\draw( 4.5, 2.59807621)--( 4, 2.59807621);
\draw( 4.5, 6.06217783)--( 5, 6.06217783);
\draw( 4.5, 6.06217783)--( 4.75, 6.49519053);
\draw( 4.5, 6.06217783)--( 4.25, 6.49519053);
\draw( 4.5, 6.06217783)--( 4, 6.06217783);
\draw( 5, 6.92820323)--( 5.5, 6.92820323);
\draw( 5, 6.92820323)--( 5.25, 7.36121593);
\draw( 5, 6.92820323)--( 4.75, 7.36121593);
\draw( 5, 6.92820323)--( 4.5, 6.92820323);
\draw( 5, 6.92820323)--( 4.75, 6.49519053);
\draw( 5, 6.92820323)--( 5.25, 6.49519053);
\draw( 4.5, 7.79422863)--( 5, 7.79422863);
\draw( 4.5, 7.79422863)--( 4.75, 8.22724134);
\draw( 4.5, 7.79422863)--( 4.25, 8.22724134);
\draw( 4.5, 7.79422863)--( 4, 7.79422863);
\draw( 4.5, 7.79422863)--( 4.25, 7.36121593);
\draw( 4.5, 7.79422863)--( 4.75, 7.36121593);
\draw( 5, 8.66025404)--( 5.25, 9.09326674);
\draw( 5, 8.66025404)--( 4.75, 9.09326674);
\draw( 5, 8.66025404)--( 4.5, 8.66025404);
\draw( 5, 8.66025404)--( 4.75, 8.22724134);
\draw( 5, 8.66025404)--( 5.25, 8.22724134);
\draw( 4.5, 9.52627944)--( 5, 9.52627944);
\draw( 4.5, 9.52627944)--( 4.75, 9.95929214);
\draw( 4.5, 9.52627944)--( 4.25, 9.95929214);
\draw( 4.5, 9.52627944)--( 4, 9.52627944);
\draw( 4.5, 9.52627944)--( 4.25, 9.09326674);
\draw( 4.5, 9.52627944)--( 4.75, 9.09326674);
\draw( 5, 10.3923048)--( 4.75, 10.8253175);
\draw( 5, 10.3923048)--( 4.5, 10.3923048);
\draw( 5, 10.3923048)--( 4.75, 9.95929214);
\draw( 5, 10.3923048)--( 5.25, 9.95929214);
\draw( 4.5, 11.2583302)--( 4.25, 11.691343);
\draw( 4.5, 11.2583302)--( 4, 11.2583302);
\draw( 4.5, 11.2583302)--( 4.25, 10.8253175);
\draw( 4.5, 11.2583302)--( 4.75, 10.8253175);
\draw( 5.5, 2.59807621)--( 6, 2.59807621);
\draw( 5.5, 2.59807621)--( 5, 2.59807621);
\draw( 5.5, 6.06217783)--( 6, 6.06217783);
\draw( 5.5, 6.06217783)--( 5.75, 6.49519053);
\draw( 5.5, 6.06217783)--( 5.25, 6.49519053);
\draw( 5.5, 6.06217783)--( 5, 6.06217783);
\draw( 6, 6.92820323)--( 5.75, 7.36121593);
\draw( 6, 6.92820323)--( 5.5, 6.92820323);
\draw( 6, 6.92820323)--( 5.75, 6.49519053);
\draw( 6, 6.92820323)--( 6.25, 6.49519053);
\draw( 5.5, 7.79422863)--( 5.25, 8.22724134);
\draw( 5.5, 7.79422863)--( 5, 7.79422863);
\draw( 5.5, 7.79422863)--( 5.25, 7.36121593);
\draw( 5.5, 7.79422863)--( 5.75, 7.36121593);
\draw( 5.5, 9.52627944)--( 6, 9.52627944);
\draw( 5.5, 9.52627944)--( 5.25, 9.95929214);
\draw( 5.5, 9.52627944)--( 5, 9.52627944);
\draw( 5.5, 9.52627944)--( 5.25, 9.09326674);
\draw( 6.5, 2.59807621)--( 7, 2.59807621);
\draw( 6.5, 2.59807621)--( 6, 2.59807621);
\draw( 7, 5.19615242)--( 7.5, 5.19615242);
\draw( 7, 5.19615242)--( 6.75, 5.62916512);
\draw( 6.5, 6.06217783)--( 6.25, 6.49519053);
\draw( 6.5, 6.06217783)--( 6, 6.06217783);
\draw( 6.5, 6.06217783)--( 6.75, 5.62916512);
\draw( 6.5, 9.52627944)--( 7, 9.52627944);
\draw( 6.5, 9.52627944)--( 6, 9.52627944);
\draw( 7.5, 2.59807621)--( 8, 2.59807621);
\draw( 7.5, 2.59807621)--( 7, 2.59807621);
\draw( 8, 5.19615242)--( 8.5, 5.19615242);
\draw( 8, 5.19615242)--( 7.5, 5.19615242);
\draw( 8, 5.19615242)--( 8.25, 4.76313972);
\draw( 7.5, 9.52627944)--( 8, 9.52627944);
\draw( 7.5, 9.52627944)--( 7, 9.52627944);
\draw( 8.5, 2.59807621)--( 8.75, 3.03108891);
\draw( 8.5, 2.59807621)--( 8, 2.59807621);
\draw( 9, 3.46410162)--( 9.25, 3.89711432);
\draw( 9, 3.46410162)--( 8.75, 3.89711432);
\draw( 9, 3.46410162)--( 8.75, 3.03108891);
\draw( 8.5, 4.33012702)--( 9, 4.33012702);
\draw( 8.5, 4.33012702)--( 8.75, 4.76313972);
\draw( 8.5, 4.33012702)--( 8.25, 4.76313972);
\draw( 8.5, 4.33012702)--( 8.75, 3.89711432);
\draw( 9, 5.19615242)--( 9.5, 5.19615242);
\draw( 9, 5.19615242)--( 8.5, 5.19615242);
\draw( 9, 5.19615242)--( 8.75, 4.76313972);
\draw( 9, 5.19615242)--( 9.25, 4.76313972);
\draw( 8.5, 9.52627944)--( 9, 9.52627944);
\draw( 8.5, 9.52627944)--( 8, 9.52627944);
\draw( 9.5, 4.33012702)--( 9.75, 4.76313972);
\draw( 9.5, 4.33012702)--( 9.25, 4.76313972);
\draw( 9.5, 4.33012702)--( 9, 4.33012702);
\draw( 9.5, 4.33012702)--( 9.25, 3.89711432);
\draw( 10, 5.19615242)--( 10.5, 5.19615242);
\draw( 10, 5.19615242)--( 9.5, 5.19615242);
\draw( 10, 5.19615242)--( 9.75, 4.76313972);
\draw( 10, 8.66025404)--( 9.75, 9.09326674);
\draw( 10, 8.66025404)--( 10.25, 8.22724134);
\draw( 9.5, 9.52627944)--( 9, 9.52627944);
\draw( 9.5, 9.52627944)--( 9.75, 9.09326674);
\draw( 11, 5.19615242)--( 11.25, 5.62916512);
\draw( 11, 5.19615242)--( 10.5, 5.19615242);
\draw( 11, 6.92820323)--( 10.75, 7.36121593);
\draw( 11, 6.92820323)--( 11.25, 6.49519053);
\draw( 10.5, 7.79422863)--( 10.25, 8.22724134);
\draw( 10.5, 7.79422863)--( 10.75, 7.36121593);
\draw( 11.5, 6.06217783)--( 11.25, 6.49519053);
\draw( 11.5, 6.06217783)--( 11.25, 5.62916512);
\end{tikzpicture}
	\end{tabular}}
	\caption{An illustration of the nibbling operation applied to a mesh
		graph in a non-simply-connected region. The left diagram shows the starting
		graph (after deleting two vertices). The middle diagram shows an intermediate
		result. The right diagram shows the final weak graph which is isometric with
		the starting graph.}
	\label{f.meshnibble2}
\end{figure*}

Suppose we have a simply-connected region which has been tiled with 
triangles (for example) and we should like to know the graph distance between
two chosen vertices which are not on the boundary. We proceed as follows.
First delete those two vertices, then nibble the graph until it is weak.
There will then remain a much smaller graph in which there is a path between the
6-cycles surrounding the deleted vertices, such that the graph distance
between them is the same as in the original graph. Figure \ref{f.meshnibble1}
shows an example. 

This same idea can be applied also to a non-simply-connected region,
but now the weak graph that remains after nibbling will not be reduced
as much as in the simpler case, so there still remains some 
computational work to do to find the
distance in that graph; c.f. Fig. \ref{f.meshnibble2}. 

By exploiting the simplicity and properties of mesh graphs one can find
redundant vertices in them with very little computational expense, with
the result that the nibbling approach may offer some practical computational
advantage in some settings. But whether or not it does so, it is a
pleasing observation which has something in common with the way the human
brain detects small paths between points in the visual field
by first eliminating from consideration regions which are irrelevant.

\section{Relation to distance-hereditary graphs}

\defin{A {\em distance-hereditary} graph is one in which the matrix of
distances in any connected induced subgraph is the same as the corresponding section of the distance matrix of the original graph.}

Distance-hereditary graphs have been much studied and much
is known about them. The definition implies that deletion of any set
of vertices which is not a cut-set is an isometric operation. This
implies that every vertex in a biconnected distance-hereditary graph
is redundant, and every vertex which is not a cut-vertex
in a singly connected (i.e. separable) distance-hereditary graph is redundant.

A distance-hereditary graph has at least two surrounded vertices.
Proof: the graph P2 has two surrounded vertices and is distance-hereditary.
All other distance-hereditary graphs can be formed by one or more operations of
adding a pendant or splitting a vertex into twins. Neither of those
operations can reduce the number of surrounded vertices. \qed

The set of strong graphs is intermediate between the set of all graphs and
the set of distance-hereditary graphs. The latter is not a subset of
the set of strong graphs because our definition of {\em strong} does
not allow pendants for order $n > 2$. 
Also the property of being strong is far from
guaranteeing isometric induced subgraphs in general. But if all
subgraphs obtained by deleting a single vertex from a strong graph
are themselves strong, and all those by deleting a further vertex, and
so on, then the original graph is distance-hereditary. The concept
of nibbling draws attention to another possibility. If a graph of order
$n$ can be
nibbled away to zero then it has a sequence of isometric induced subgraphs
of every order $\le n$, each of which is an induced subgraph of the
one before. We have given evidence 
that most graphs have this property.

\section{Applications}

The concepts of distance redundancy and strong and weak graphs have some
modest application to the design of communications networks as was
already remarked. The weak graphs have the property that every vertex is
playing its part in providing some shortest route. The strong graphs have
the property that distances are protected under the elimination of any
given vertex. It may also be useful in some circumstances to compute which
if any vertices are distance-redundant in some given graph. For example,
if one wishes to compute all distances in the graph, it may be useful
on a first pass to discover which vertices are first-order
distance-redundant, 
and then subsequently
to leave those vertices out of the computation when finding distances between other pairs (paying attention to the conditions stipulated in lemma
\ref{l.isored} and theorem \ref{th.isosurround}). If one 
has reason to believe the graph has high diameter, it may also 
be worthwhile to find the
2nd- or higher-order distance-redundant vertices
or to nibble the graph. 

The concept of distance redundancy
readily generalises to the case of a graph with edge weights.
A vertex can be deemed distance-redundant 
if it does not offer
a shorter route between any of its neighbours than is available
without it. A vertex which offers shortest routes among all
the non-adjacent neighbours of $v$ may be said to surround $v$.

\appendix
\section{Further remarks on graph generation}

\lemm{\label{l.nored}
If graph $G$ has no redundant vertices then 
in $G-v$ no vertex is surrounded by a redundant vertex, 
$\forall v  \in G$.}

\begin{proof}
If $G-v$ has a vertex surrounded by a redundant vertex, then in order
to form $G$ with no redundant vertices, one would have to introduce a
vertex $v$ which is adjacent to the surrounded vertex $u$ (otherwise that vertex
will still be surrounded) and which is also adjacent to the 
surrounding
redundant vertex
$w$ (otherwise that vertex will still be redundant). But after this
operation, $w$ will be adjacent to all of $N(u) \setminus \{w\}$ so $u$ is surrounded.
This contradicts the requirement on $G$. \qed
\end{proof}

\begin{corollary}
In order to generate all graphs of order $n$ containing no
redundant vertices, it is sufficient to consider just those which can be
obtained by augmenting (by the addition of a vertex and edges incident on it)
graphs of order $n-1$ in which no vertex is surrounded by a redundant vertex.
\end{corollary}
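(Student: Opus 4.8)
The plan is to obtain the corollary as an immediate consequence of Lemma \ref{l.nored}. What must be shown is that every graph of order $n$ with no redundant vertices arises from the stated augmentation procedure; this establishes \emph{sufficiency}, because the procedure then produces a set that contains the entire target class (after which the augmentations that happen to possess a redundant vertex are simply discarded). So the whole argument is a ``reverse engineering'' of a single vertex deletion.

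First I would take an arbitrary graph $G$ on $n$ vertices having no redundant vertex and pick any vertex $v \in V(G)$. Set $H = G - v$, a graph of order $n-1$. By Lemma \ref{l.nored}, $H$ has no vertex that is surrounded by a redundant vertex, so $H$ belongs to exactly the class of order-$(n-1)$ graphs to which the procedure is to be applied. Next I would observe that $G$ is recovered from $H$ by re-inserting $v$ together with the edges joining $v$ to $N_G(v)$, which is precisely an instance of ``augmenting $H$ by the addition of a vertex and edges incident on it.'' Hence $G$ is one of the graphs generated by the procedure, and since $G$ was arbitrary in the target class, the procedure cannot miss any such graph.

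There is essentially no obstacle here beyond one point worth flagging: the corollary asserts only that it is \emph{sufficient} to consider these augmentations, not that every such augmentation yields a graph free of redundant vertices; indeed many will not, and those are filtered out afterwards. All the genuine content sits in Lemma \ref{l.nored}, which guarantees that the intermediate graph $H$ lands in the restricted search class, so that exhaustive enumeration of all order-$(n-1)$ graphs is unnecessary.
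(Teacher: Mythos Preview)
Your argument is correct and is exactly the intended derivation: the paper offers no separate proof of the corollary, treating it as immediate from Lemma~\ref{l.nored}, and your ``delete any vertex, invoke the lemma, re-augment'' reasoning is precisely how that immediacy is cashed out. Your remark that the procedure may also produce graphs with redundant vertices (to be discarded) is accurate and worth including.
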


\ifodd 0
\begin{table}
\begin{tabular}{c|rrr}
$n$ & no vsr. & wk,2 & wk \& $d_{\rm min}$ \\
\hline
2 & 0 & 2 & 2\\ 
3 & 0 & 4 & 4\\
4 & 1 & 10 & 7\\
5 & 3 & 29 & 18\\
6 & 13 & 114 & 62\\
7 & 70 & 664 & 336\\
8 & 788 & 6754 & 2357\\
9 & ? & \\
10 & ? & \\
\end{tabular}
\caption{Statistics of small graphs. The columns indicate the
number of graphs of order $n$ with, respectively:
no vertex surrounded by a redundant vertex (the condition of lemma
\ref{l.nored}), 
all vertices in weak links
are within distance 2 of one another (the condition of lemma \ref{l.weak}),
and (last column) that condition with also the condition $|S| \le d_{\rm min}+1$.}
\label{t.statnsr}
\end{table}
\fi

\lemm{\label{l.weak}
If graph $G$ has all vertices redundant then 
$\forall v  \in G$, $G-v$ has the following property:
if $S$ is the set of vertices which are weakly linked to at
least one other vertex, then all distances
among members of $S$ are $\le 2$.}

\begin{proof}
We consider a general graph $G-v$ and seek to add a single vertex,
and edges incident on that vertex, such that the resulting graph will have
no weakly linked pairs. First consider a weakly-linked vertex pair $(u,w)$
in $G-v$. The new vertex has to be adjacent to both members of this pair
or they will still be weakly linked in $G$. If that is the only such pair
then the added vertex is a twin so $G$ has all vertices redundant, as
required. If there are further weakly linked pairs, consider any one of them
$(s,t)$ (this pair may or may not intersect the pair $(u,w)$.) 
The added vertex $v$ must be adjacent to both of $s$ and $t$
in order to strengthen their link. But $v$ now provides a 2-walk from
each of $(u,w)$ to each of $(s,t)$. These will be weak links in $G$ unless
in $G-v$ each of $(u,w)$ is at distance less than or equal to 2 from
each of $(s,t)$. The argument applies to all pairs of pairs in the set 
$S$.
 \qed
\end{proof}

\begin{corollary}
In order to generate all graphs of order $n$ in which all vertices
are redundant, it is sufficient to consider just those which can be
obtained by augmenting (by the addition of a vertex and edges incident on it)
graphs of order $n-1$ in which the property described in the lemma holds.
\end{corollary}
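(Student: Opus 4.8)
The plan is to read Lemma \ref{l.weak} in reverse, treating it not as a structural fact about a fixed graph but as a \emph{completeness} statement for the generation procedure. The claim to establish is one of sufficiency only: that every graph of order $n$ with all vertices redundant (a strong graph) appears somewhere in the output of the prescribed augmentation process. We are not asked to prove soundness---that the process yields \emph{only} strong graphs---and indeed it will in general produce many graphs that a subsequent redundancy check must discard. So the entire task reduces to showing a single set containment: the strong graphs of order $n$ sit inside the graphs obtainable by augmenting the restricted order-$(n-1)$ class.

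First I would take an arbitrary strong graph $G$ of order $n$ and fix any vertex $v \in G$. Deleting $v$ yields a graph $G-v$ of order $n-1$, and re-adding $v$ together with its incident edges is precisely an augmentation in the sense of the corollary, recovering $G$ exactly. The key step is then to invoke Lemma \ref{l.weak}: because $G$ has all vertices redundant, the lemma guarantees that $G-v$ satisfies the stated property, namely that the vertices participating in weak links are all mutually within distance $2$. Hence $G-v$ lies in exactly the restricted class of order-$(n-1)$ graphs that the procedure scans, and $G$ arises from it by one augmentation. Since $G$ was an arbitrary strong graph, every strong graph of order $n$ is thereby produced, which is the assertion of the corollary.

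The only place the argument could slip is in confirming that the deleted-then-restored vertex genuinely realises the augmentation operation as defined---addition of a single vertex and edges incident on it, with nothing else in the graph altered---and that no multi-vertex deletion is secretly required. This is immediate: removing $v$ and then restoring it changes no other vertex and no other edge, so $G$ and the augmentation of $G-v$ coincide. Consequently there is no real obstacle; the substance of the corollary lies entirely in Lemma \ref{l.weak}, which supplies the necessary condition on $G-v$, and the corollary merely exploits that necessary condition to prune the search space without losing any target graph. The write-up need only make this backward reading explicit and note that filtering the augmented candidates for the all-redundant property then recovers exactly the strong graphs of order $n$.
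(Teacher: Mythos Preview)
Your argument is correct and is exactly the intended reading: the paper states the corollary without separate proof because it follows immediately from Lemma~\ref{l.weak} in precisely the way you describe---delete any vertex $v$ from a strong $G$, observe via the lemma that $G-v$ lies in the restricted class, and note that the augmentation restoring $v$ recovers $G$.
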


\bibliographystyle{plain}

\bibliography{graphrefs}

\end{document}